\theoremstyle{plain}
\newtheorem{thm}{Theorem}[section]
\newtheorem{lem}[thm]{Lemma}
\newtheorem{prop}[thm]{Proposition}
\theoremstyle{definition}
\newtheorem{ex}[thm]{Example}
\newcommand{\mv}[1]{\boldsymbol{#1}} 
\mathchardef\semic="303A
\newcommand{\R}{{\mathbf R}}
\newcommand{\C}{{\mathbf C}}
\newcommand{\Z}{{\mathbf Z}}
\newcommand{\mH}{{\mathcal H}}
\newcommand{\mD}{{\mathcal D}}
\newcommand{\mX}{{\mathcal X}}
\newcommand{\mY}{{\mathcal Y}}
\newcommand{\mL}{{\mathcal L}}
\newcommand{\mE}{{\mathcal E}}
\newcommand{\mZ}{{\mathcal Z}}
\DeclareMathOperator{\re}{Re}
\newcommand{\im}{\text{{\rm Im}}\,}
\newcommand{\sett}[2]{ \{ #1 \, \semic \, #2 \} }
\newcommand{\supp}{\text{{\rm supp}}\,}
\newcommand{\ran}{\textsf{R}}
\newcommand{\clos}[1]{\overline{#1}}
\newcommand{\sgn}{\text{{\rm sgn}}}
\newcommand{\barint}{\mbox{$ave \int$}}
\newcommand{\divv}{{\text{{\rm div}}}}
\newcommand{\curl}{{\text{{\rm curl}}}}
\newcommand{\esssup}{\text{{\rm ess sup}}}
\newcommand{\pd}{\partial}
\newcommand{\loc}{\text{{\rm loc}}}
\newcommand{\pv}{\text{p.v.}}
\newcommand{\ca}{\text{ca}}
\def\barint_#1{\mathchoice
            {\mathop{\vrule width 6pt
height 3 pt depth -2.5pt
                    \kern -8.8pt
\intop}\nolimits_{#1}}%
            {\mathop{\vrule width 5pt height
3 pt depth -2.6pt
                    \kern -6.5pt
\intop}\nolimits_{#1}}%
            {\mathop{\vrule width 5pt height
3 pt depth -2.6pt
                    \kern -6pt
\intop}\nolimits_{#1}}%
            {\mathop{\vrule width 5pt height
3 pt depth -2.6pt
          \kern -6pt \intop}\nolimits_{#1}}}
\definecolor{gr}{rgb}   {0.,   0.8,   0. }
\definecolor{bl}{rgb}   {0.,   0.5,   1. }
\definecolor{mg}{rgb}   {0.7,  0.,    0.7}
\begin{document}

\title[Causal sparse domination of Beurling maximal regularity operators]
{Causal sparse domination of Beurling maximal regularity operators}
\author[Tuomas Hyt\"onen]{Tuomas Hyt\"onen$\,^1$}
\author[Andreas Ros\'en]{Andreas Ros\'en$\,^2$}
\thanks{$^1\,$T.H. was supported by the Academy of Finland (grant No. 314829 ``Frontiers of singular integrals'')}
\thanks{$^2\,$Formerly Andreas Axelsson}
\address{Tuomas Hyt\"onen, Department of Mathematics and Statistics, P.O. Box 68 (Pietari Kalmin katu 5), FI-00014 University of Helsinki, Finland} 
\email{tuomas.hytonen@helsinki.fi}
\address{Andreas Ros\'en\\Mathematical Sciences, Chalmers University of Technology and University of Gothenburg\\
SE-412 96 G{\"o}teborg, Sweden}
\email{andreas.rosen@chalmers.se}

\keywords{Sparse domination,
one-sided singular integrals,
Carleson estimate,
non-tangential maximal function,
maximal regularity,
Beurling transform}
\subjclass[2010]{42B35, 42B20, 42B37}

\begin{abstract}
We prove boundedness of Calder\'on--Zygmund operators acting in Banach
functions spaces on domains, defined by the $L_1$ Carleson functional and
$L_q$ ($1<q<\infty$) Whitney averages. For such bounds to hold, we assume
that the operator maps towards the boundary of the domain.
We obtain the Carleson estimates by proving a pointwise domination of the
operator, by sparse operators with a causal structure.
The work is motivated by maximal regularity estimates for elliptic PDEs
and is related to one-sided weighted estimates for singular integrals.
\end{abstract}

\maketitle

\section{Introduction}

We prove estimates of Calder\'on--Zygmund operators (CZOs) acting on functions  
$f(t,x)$ defined on a half space 
$\R^{1+n}_+=\sett{\mv x=(t,x)}{t>0,x\in\R^n}$, in function 
spaces defined by $L_p(\R^n)$ norms of the classical Carleson and non-tangential 
maximal functionals $(Cf)(x)$ and $(Nf)(x)$ respectively.
We recall their definitions in \eqref{eq:Cdef} and \eqref{eq:Ndef} below.
For CZOs to be even locally bounded inside 
$\R^{1+n}_+$, it is necessary to modify $C$ and $N$ since 
these build on $L_1(\R^{1+n}_+)$ and $L_\infty(\R^{1+n}_+)$
norms respectively.
Following Kenig and Pipher~\cite{KenigPipher:93}, Auscher and 
Axelsson~\cite{AuscherAxelsson:11}, Hyt\"onen and 
Ros\'en~\cite{HytonenRosen:13} and Huang~\cite{Huang:16}, we use Whitney $L_q$ averaging $W_q f(t,x)$
with $1<q<\infty$, as defined in \eqref{eq:Wdef} below, and consider function
spaces defined by norms 
$$
\|C(W_{q} f)\|_{p}\quad\text{and} \quad
\|N(W_{q} f)\|_{p},
$$ 
which encode interior local $L_q$ regularity, transversal $L_1$ or $L_\infty$
regularity, and $L_p$ regularity along the boundary $\R^n$.
It was shown in \cite{HytonenRosen:13} that the norms
$\|N(W_{q'} f)\|_{p'}$ and
$\|C(W_{q} f)\|_{p}$ are dual, $1/p+1/p'=1$, $1/q+1/q'=1$.
In applications to elliptic partial differential equations (PDEs) as in 
\cite{KenigPipher:93, AuscherAxelsson:11},  
the case $q=2$ is particularly important, since for gradients $f$
of weak solutions to elliptic equations, in general we do not have 
pointwise interior bounds but only local $L_2$ estimates on
Whitney regions.

A general  CZO on $\R^{1+n}_+$ 
$$
  Sf(\mv x)= \int_{\R^{1+n}_+} k(\mv x,\mv y)f(\mv y) d\mv y,
$$
$\mv x= (t,x), \mv y=(s,y)$,
fails to be bounded in any of the norms above, even with Whitney averaging.
A key observation that we make is that for causal CZOs, bounds in some
of the norms hold.
By causal we mean that we consider either an upward mapping CZO, denoted
$S^+$, where $k=0$ for $t<s$, or a downward mapping CZO, denoted
$S^-$, where $k=0$ for $t>s$.
In our main Theorem~\ref{thm:mainthm}, we prove that downward mapping 
CZOs $S^-$ are bounded in Whitney modified Carleson norms $\|C(W_q f)\|_p$
and that, dually, upward mapping CZOs $S^+$ are bounded in Whitney modified 
non-tangential maximal norms $\|N(W_{q'} f)\|_{p'}$.

In Section~\ref{sec:PDEappl}, we discuss motivating examples: maximal
regularity operators arising from integration of elliptic PDEs 
in the variable transversal to the boundary as in 
\cite{AuscherAxelsson:11}.
At the end of the present paper, 
we explain how our Theorem~\ref{thm:mainthm}, in the
presence of pointwise kernel bounds, sharpens the estimate of half
of the maximal regularity operators in \cite{AuscherAxelsson:11}.
In the case of the Laplace equation in $\R^2_+$, the 
maximal regularity operators $S^\pm$ appearing are
the two causal parts of the Beurling transform, in
which case one of the key estimates from \cite{AuscherAxelsson:11}
independently have been proved by Astala and Gonz\'alez~\cite{AstalaGonzalez:16}.
We recall that the kernel $k(\mv x,\mv y)=(\mv x-\mv y)^{-2}$ of this Beurling transform is symmetric and as a consequence
its causal parts $S^\pm$ are $L_2$ bounded. 
Hence they are examples of rough CZOs: their kernels are discontinuous 
across the hypersurface $t=s$. With a slight abuse of notation we refer
to such rough CZOs simply as CZOs below.

Our Carleson estimate for $S^-$ uses the method of sparse domination 
of CZO by A. Lerner. In Section~\ref{sec:causaldom} 
we adapt the proof for general CZO from 
Lerner~\cite{Lerner:16} and prove a domination of causal CZOs by certain causal sparse
operators.  
Here we take a direction somewhat against the mainstream of recent activity, where the trend has been to establish sparse domination for ever more general classes of operators; in contrasts, we deal with operators having additional structure (the causality), and the point is to preserve at least part of this structure in the dominating sparse operators as well.
The one-dimensional special case, $n=0$, of our Theorem~\ref{thm:sparsecausal}
reads 
\begin{equation}   \label{eq:onedim}
  |S^- f(t)|\lesssim\sum_{t\in(a,a+\ell)\in\mD_f}\ell^{-1}\int_{a+\ell/2}^{a+2\ell} |f(s)|ds,
\end{equation}
where the sum is over a sparse collection $\mD_f$ of dyadic intervals $(a,a+\ell)$,
but the average is over the right half and a right neighbourhood of this interval.

There are a number of previous results dealing with causal (also known as one-sided) operators and related weight classes. Due to the intimate connections of sparse domination and weighted norm inequalities, which have been explored in the recent literature, we briefly comment on these works. 
Sawyer \cite{Sawyer:86} found that the one-sided maximal operator on the line satisfies weighted estimates if and only if the weight satisfies a one-sided $A_p$ condition. Later, Aimar et al. \cite{AFMR:97} showed that the one-sided $A_p$ condition is also the right condition for the boundedness of one-sided singular integrals on the line. Versions of these results for operators on a half-line (case $n=0$ of our setting in $\R_+^{1+n}$), including extensions to operator-valued kernels and applications to maximal regularity of Cauchy problems, are recently due to Chill and Krol \cite{ChillKrol:18}. We refer to this paper for an extensive bibliography of other related works.

A sharp form of the one-sided maximal function estimates was found by Mart\'{\i}n-Reyes and de la Torre \cite{MRdlT:15}. The corresponding problem (a ``one-sided $A_2$ conjecture'') for one-sided singular integrals remains open, but the analogous result for one-sided martingale transforms has been achieved in Chen and Lacey \cite{ChenLacey:20}. At the time of writing, the established methods of reducing continuous singular integrals to their dyadic models (including \cite{Lacey:elem,Lerner:simple,Lerner:16,LerNaz:19,LerOmb:20} in the case of regular CZOs and \cite{BFP:16,CACDPO:17} for some rough extensions, to name but a selection of the extensive literature) are not available in a form that would respect the one-sided structure. In the present work we obtain a result that may be seen as a version of sparse domination for one-sided operators. While this version serves our present purposes, it is not strong enough to make progress on the mentioned weighted questions. The problem is that our dominating sparse operators are not strictly one-sided (for that, the integral in \eqref{eq:onedim} should be over 
$(a+\ell, a+2\ell)$ only) and cannot possibly be estimated in terms of one-sided $A_p$ weights.

As mentioned above, one-sided weighted estimates for vector-valued singular integrals have been applied to maximal regularity operators for Cauchy problems
 in \cite{ChillKrol:18}.
However, our Carleson estimates cannot be viewed as such weighted estimates, but
rather correspond to the end-point norms $r=1$ and $r=\infty$ in the scale of $L_r$ tent spaces from Coifman, Meyer and Stein~\cite{CoifmanMeyerStein:85}.
Estimates for maximal regularity operators in tent spaces based on the $L_2$ 
area functional are in 
Auscher, Kriegler, Monniaux and Portal~\cite{AuscherKrieglerMonniauxPortal:12}. 
Tent space estimates of horizontally 
mapping,  that is, acting in the $\R^n$ variable,  CZOs are in Auscher and Prisuelos-Arribas~\cite{AuscherPrisuelos:17}.
In Section~\ref{sec:counterex}, we include counterexamples that show 
that for non-causal $\R^{1+n}_+$ CZOs, as well as horizontally mapping
$\R^n$ CZOs, the Carleson estimates considered in this paper fail in general.

\section{CZOs on  non-tangential  and Carleson spaces}  \label{sec:counterex}

We denote cubes on the boundary $\R^n$ by $Q,R,\ldots,$
and cubes in the half-space $\R^{1+n}_+$ by boldface $\mv Q, \mv R,\ldots$.
We denote $n$ or $n+1$ dimensional measure by $|\cdot|$ depending
on the dimension of the cube, and we denote by $cQ$, $c>0$, the cube with same center
as $Q$ and sidelength $\ell(cQ)= c\ell(Q)$.
For $\mv Q\subset \R^{1+n}_+$ we form $c\mv Q$ in $\R^{1+n}$ as above,
and define $c\mv Q\subset \R^{1+n}_+$ as the intersection with $\R^{1+n}_+$,
which may not be a cube if $\mv Q$ is near $\R^n$. 
A Carleson cube in $\R^{1+n}_+$ is a cube of the form
$$
  \mv Q= Q^\ca = (0,\ell(Q))\times Q,
$$
where $Q$ is a cube in $\R^n$ with sidelength $\ell(Q)$.
The corresponding Whitney region is the top half
$$
  Q^w= (\ell(Q)/2, \ell(Q))\times Q
$$
of $Q^\ca$, and is the union of $2^n$ cubes in $\R^{1+n}_+$ of 
sidelength $\ell(Q)/2$.
We denote points in $\R^n$ by $x,y,\ldots,$ and 
points in $\R^{1+n}_+$ by boldface $\mv x=(t,x), \mv y=(s,t),\ldots$
Indicator functions of cubes and more general sets $E$ are denoted by
$1_E$. 
Subsets, not necessarily strict, are denoted $A\subset B$. 

In particular we use dyadic cubes, and  
 let $\mD=\bigcup_{j\in\Z}\mD_j$ denote a system of dyadic cubes in $\R^n$,
 with $\mD_j$ being the cubes of sidelength $\ell(Q)= 2^{-j}$, such that
 the dyadic cubes in $\mD$ form a connected tree under inclusion.
Let $\mv\mD=\bigcup_{j\in\Z}\mv\mD_j$ denote the associated dyadic
system for $\R^{1+n}_+$, where $\mv\mD_j$ consists of dyadic cubes
of the form $\mv Q= (k2^{-j}, (k+1)2^{-j})\times Q$, $Q\in\mD_j$, $k=0,1,2,\ldots$.
We note that also $\mv\mD$ form a connected tree under inclusion where
the dyadic Carleson cubes, the cubes touching $\R^n$, will play the 
special role.

We consider CZOs acting on functions defined on the half space $\R^{1+n}_+$,
$n\ge 1$,  and belonging to endpoint spaces in the scale 
of tent spaces from \cite{CoifmanMeyerStein:85}, whose
definitions use 
the non-tangential maximal and Carleson functionals 
\begin{align}
  Nf(x) &= \esssup_{|y-x|<\alpha t} |f(t,y)|, \label{eq:Ndef} \\
  Cf(x) &= \sup_{Q\ni x}|Q|^{-1}\iint_{Q^\ca}|f(t,x)|dtdx, \label{eq:Cdef}
\end{align}
where the second supremum is over all cubes $Q\subset\R^n$ containing
$x$,
and the Whitney averaging operator
\begin{equation}  \label{eq:Wdef}
  W_q f(t,x)= |Q^w|^{-1/q} \|f\|_{L_q(Q^w)}, 
\end{equation}
where $Q\subset \R^n$ is the cube with center $x$ and sidelength $t$.
More precisely, we consider the norms  $\|N(W_{q'} f)\|_{L_{p'}(\R^n)}$,
introduced in \cite{KenigPipher:93}, and predual norms
$\|C(W_{q} f)\|_{L_{p}(\R^n)}$  considered in \cite{HytonenRosen:13}. 
We exclude $p=1$, where the former norms are $\infty$ except
for $f=0$, while the 
latter norms are $\|W_{q} f\|_\infty$. 
To avoid technicalities we also exclude $p=\infty$, although
this endpoint case sometimes is of interest and some of our
results hold for $p=\infty$, mutatis mutandis.  
We recall from \cite{HytonenRosen:13,HytonenRosen:18} that up to 
constants these two scales of norms are independent of the choice
of aperture $\alpha>0$ and the precise shape of the Whitney regions
and Carleson cubes. Moreover, for the dyadic functionals
\begin{align*}
  N_{\mD}f(x) &= \sup_{Q\in\mD, Q\ni x} f_{Q}, \\
  C_{\mD}f(x) &= \sup_{Q\in \mD, Q\ni x}|Q|^{-1}\sum_{R\in\mD,R\subset Q}
  |R^w| f_{R},
\end{align*}
acting on sequences $(f_Q)_{Q\in\mD}$,
and dyadic Whitney $L_q$ averages 
$$
(W_{\mD,q}f)_Q= |Q^w|^{-1/q} \|f\|_{L_q(Q^w)},  \qquad Q\in\mD,
$$ 
we have  $\|N_{\mD}W_{\mD, q'}f\|_{p'}\approx \|N(W_{q'} f)\|_{p'}$ and
$\|C_{\mD}W_{\mD,q}f\|_{p}\approx \|C(W_{q} f)\|_{p}$ for
$1< p,q <\infty$. 
Finally, we recall that if $1<p<\infty$, then $\|Cf\|_p\approx \|Af\|_p$,
where
$$
  Af(x)= \iint_{|y-x|<\alpha t} |f(t,y)|t^{-n} dtdx,
$$
but the estimate $\gtrsim$ breaks down as $p\to\infty$,
and $\lesssim$ breaks down as $p\to 1$.

We are interested in boundedness of CZOs in the norms
$\|C(W_q f)\|_p$ or equivalently, by duality,
in the norms $\|N(W_{q'} f)\|_{p'}$. 
As discussed in the Introduction, it is necessary to require 
$1<q<\infty$, and main applications concern $q=2$.
For the Carleson functional, which builds on $L_1$ integrals, it is
natural to make use of the recent technology of sparse domination
of CZOs.
We recall the following estimate of A. Lerner~\cite{Lerner:16}.
Assume given a standard CZO $S$ on $\R^{1+n}_+$ and a function 
$f\in L_1(\R^{1+n}_+)$
with bounded support.
Then there exists a collection $\mv\mD_f\subset\mv\mD$ of cubes,
which is $\eta$-sparse, for some $0<\eta<1$ independent of $f$,
such that 
$$
  |Sf(\mv x)| \lesssim \sum_{\mv Q\in\mv \mD_f, Q\ni \mv x} 
  \barint_{3\mv Q} |f(\mv y)| d\mv y
$$
holds pointwise almost everywere.
Here $\eta$-sparse means that 
there exists subsets $E_{\mv Q}\subset \mv Q$, $\mv Q\in \mv \mD_f$,
such that $|E_{\mv Q}|\ge \eta |\mv Q|$ for all $\mv Q\in \mv \mD_f$, and 
$E_{\mv Q}\cap E_{\mv R}=\emptyset$ if $\mv Q\ne \mv R$.

The method of sparse domination has proven very successful in proving
optimal bounds for CZOs on weighted $L_p$ spaces.
We apply in this paper the technique to the scales of Banach
function spaces described above.
Since $N$ and $C$ involve an $L_\infty$ and $L_1$ norm respectively however,
it is not surprising that in general we have the following counterexamples.

\begin{ex}  \label{ex:counterup}
  A sparse operator, that is a sublinear operator of the form
$$
  \widehat Sf(\mv x) = \sum_{\mv Q\in\widehat{\mv\mD}, \mv Q\ni \mv x} 
  \barint_{3\mv Q} |f(\mv y)| d\mv y,
$$
with a fixed $\eta$-sparse collection $\widehat{\mv\mD}$ independent of $f$, is not in general bounded on $\|C(W_{q} f)\|_p$ for any $p,q$.
An example is as follows.
Let $n=1$ and fix a  dyadic  Carleson cube $\mv Q_0= Q_0^\ca$ with $\ell(Q_0)=1$.
Consider
\begin{equation}  \label{eq:fNfcn}
  f_N= 2^N 1_{(0,2^{-N})\times Q_0}
\end{equation}
and let $\widehat{\mv \mD}$ be the collection of all dyadic Carleson cubes contained in 
$\mv Q_0$.
Then $W_{\mD,q} f_N=f_N$  since $f_N$ is constant on Whitney regions,  $C_{\mD}f_N= 1$ on $Q_0$  since 
$\sup_{x\in Q_0}\int_0^1 f_N dt=1$,
and $\|C_{\mD}f_N\|_p\lesssim 1$ for any fixed $p>1$.
However, for $\mv x=(t,x)\in Q^w$ with $Q\subset Q_0$, $\ell(Q)= 2^{-k}$, $0\le k\le N$, we have
$$
  \widehat S f_N(\mv x)= \sum_{j=0}^k (2^N 2^{-N}2^{-j})/(2^{-j})^2
  = 2^{k+1}-1\approx 1/t.
$$
Hence $C_{\mD}(\widehat Sf_N)\gtrsim \int_{2^{-N}}^1 \frac{dt}{t}\approx N$
at each point in $Q_0$. 
It follows that $\widehat S$ fails to be bounded in the norm $\|C(W_q f)\|_p$
for any 
$1<p,q<\infty$, since $q\to W_q$ is increasing. 
\end{ex}

\begin{ex}   \label{ex:upBeucounter}
 Although domination  of CZOs by sparse operators is considered to be quite sharp, 
 Example~\ref{ex:counterup}  does not disprove that CZOs are bounded in these
Carleson norms. And indeed, by direct calculations one can show that the
Beurling transform on the upper half plane
\begin{equation}   \label{eq:Beurling}
  Sf(z)= \pv\frac{-1}\pi \int_{\im w>0} \frac {f(w)}{(w-z)^2} |dw|
\end{equation}
maps $f_N$  from \eqref{eq:fNfcn}  boundedly in the Carleson norms.
To find a counterexample for the Beurling transform, it is convenient 
to consider a weak limit 
\begin{equation}   \label{eq:weaklim}
  f(t,x)= g(x)\delta_0(t),
\end{equation}
of functions like $f_N$, to be modulated by a function $g(x)$ supported on 
$Q_0=(0,1)$ that remains to be chosen. 
In this limit we have
$$
   S f(z)= \pv\frac{-1}\pi \int_0^1 \frac {g(x)}{(x-z)^2} dx,
$$
and for real-valued $g$ we note from Cauchy--Riemann's equations
that $|Sf(z)|=|\nabla u(z)|$, where $u$ is the Poisson extension of $g$.
As discussed in  \cite[Intro.]{HytonenRosen:18},  by contructing
$g$ via a lacunary Fourier series, it is known that the
bound
\begin{equation}   \label{eq:Cup<gp}
  \|C(\nabla u)\|_p\lesssim \|g\|_p
\end{equation}
cannot hold uniformly for $g$, for any fixed $1<p<\infty$. 
Replacing $\delta_0$ in \eqref{eq:weaklim} by $2^N 1_{(0,2^{-N})}$, it  can be shown 
that $S$ fails to be bounded in the norm $\|C(W_q f)\|_p$
for any 
$1<p,q<\infty$. 
 The technical details of this counterexample are found in the 
Appendix of this paper. 
\end{ex}

\begin{ex} 
We next demonstrate that $\R^n$ CZOs acting horizontally, that is pointwise in $t$, in general are not bounded either in the norms 
$\|N(W_{q'} f)\|_{p'}$ or $\|C(W_q f)\|_p$.
A concrete counterexample is the following.
Let 
$$
  Hf(t,x)= \frac 1\pi \pv\int_\R \frac{f(t,y)}{x-y}dy
$$
be the Hilbert transform acting horizontally in $\R^2_+$, 
and consider the function
$$
  f_N(t,x)= \sum_{j=0}^N\sum_{k=2^j}^{2^{j+1}-1} 2^k 1_{(2^{-k}, 2^{1-k})}(t)
  1_{(k2^{-j}-1, (k+1)2^{-j}-1)}(x)
$$
with support in $[0,1]^2$. 
The terms in this double sum are supported on thin rectangles, with integrals
equal to their lengths in the $x$-direction.
Consider a Carleson square $\mv Q$ with sidelength $2^{-m}$.
The terms for a given $0\le j\le N$ will have in total at most integral $2^{-m}$ inside
$\mv Q$, so $C(W_{\mD,\infty} f_N)\lesssim N$ on $[0,1]$, since the thin rectangles
are unions of Whitney regions.
We conclude that
$\|C(W_\infty f_N)\|_p\lesssim N$ for each $p>1$,
since $f_N$ is supported on $[0,1]^2$. 
However, an explicit computation reveals that
$$
  \int_0^1  |Hf_N(t,x)| dx \approx  \int_0^1 2^k \ln(1+\tfrac {2^{-j}}x) dx \approx 2^{k-j}(j+1)
$$
for $t\in (2^{-k}, 2^{1-k})$, $2^j\le k<2^{j+1}$, $0\le j\le N$. 
Therefore
$$
  \int_{[0,1]^2} |Hf_N| dtdx\approx \sum_{j=0}^N\sum_{k=2^j}^{2^{j+1}-1} 2^{-j}(j+1)\approx \sum_{j=0}^N (j+1)\approx N^2,
$$
so
$C(Hf_N)\gtrsim N^2$ on $(0,1)$.
It follows that $H$ fails to be bounded in the norm $\|C(W_q f)\|_p$
for any 
$1<p,q<\infty$, since $q\to W_q$ is increasing.  
\end{ex}

\section{Causal Calder\'on--Zygmund operators}   \label{sec:causalSIOs}

We consider a Calder\'on--Zygmund (CZ) kernel $k(\mv x,\mv y)$ on 
$\R^{1+n}_+$. 
More precisely, we assume kernel bounds 
\begin{equation}   \label{eq:kernelbound}
  |k(\mv x,\mv y)|\lesssim |\mv x-\mv y|^{-(n+1)}
\end{equation}
and regularity 
\begin{equation}   \label{eq:kernelreg}
  \max(|k(\mv x,\mv y+\mv t))-k(\mv x,\mv y)|, |k(\mv x+\mv t,\mv y)-k(\mv x,\mv y)|)\lesssim |\mv t|^\gamma/|\mv x-\mv y|^{n+1+\gamma},
\end{equation}
for all $|\mv t|\le |\mv x-\mv y|/2$ and a fixed $0<\gamma\le 1$.

We consider a linear operator operator $S^+$ which is bounded
$L_2(\R^{1+n}_+)\to L_2(\R^{1+n}_+)$ and has rough CZ kernel
$$
  k^+((t, x), (s,y))=
  \begin{cases}
    k((t, x), (s,y)), & t>s, \\
    0, & t<s.
  \end{cases}
$$ 
We also consider a linear operator operator $S^-$ which is bounded
$L_2(\R^{1+n}_+)\to L_2(\R^{1+n}_+)$ and has rough CZ kernel
$$
  k^-((t, x), (s,y))=
  \begin{cases}
    0, & t>s, \\
    k((t, x), (s,y)), & t<s.
  \end{cases}
$$ 
Thus $S^\pm$ are (rough) CZOs, where 
$S^+$ is upward mapping away from
$\R^n$ and $S^-$ is downward mapping towards $\R^n$.
We refer to $S^\pm$ as {\em causal operators}.
They are simple examples of singular integral with rough kernels,
in that $k^\pm((t,x),(s,y))$ may be discontinuous on the hyperplane $t=s$
in $\R^{2(1+n)}$, with a simple jump discontinuity when $x\ne y$ and $t=s$.

\begin{lem}   \label{lem:hormander}
  The  kernels $k^\pm$  satisfy the H\"ormander regularity condition
$$
  \int_{(3\mv Q)^c} (|k^\pm(\mv x,\mv y_1))-k^\pm(\mv x,\mv y_2)| + |k^\pm(\mv y_1,\mv x)-k^\pm(\mv y_2,\mv x)|) d\mv x \lesssim 1,
$$
uniformly for all cubes $\mv Q\subset \R^{1+n}_+$ and $\mv y_1,\mv y_2\in \mv Q$.
\end{lem}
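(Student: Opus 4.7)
The plan is to reduce the Hörmander estimate for the rough kernels $k^\pm$ to the standard Hörmander estimate for the continuous kernel $k$ plus an auxiliary estimate on the thin slab where $k^\pm$ jumps. Write $\mv Q = I \times R$ with $I = (t_0, t_0+\ell)$ and $R \subset \R^n$ a cube with $\ell(R)=\ell=\ell(\mv Q)$, and set $\mv y_i = (s_i, y_i)\in \mv Q$, $i=1,2$. I would treat $k^+$; the case of $k^-$ is identical up to reversing the roles of $t$ and $s$.

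First I would split $(3\mv Q)^c$ according to the position of $t$ relative to the interval $J=(\min(s_1,s_2),\max(s_1,s_2))\subset I$. When $t>\max(s_1,s_2)$ one has $k^+(\mv x,\mv y_i)=k(\mv x,\mv y_i)$ for $i=1,2$, and when $t<\min(s_1,s_2)$ both vanish. On both of these regions, the integrand $|k^+(\mv x,\mv y_1)-k^+(\mv x,\mv y_2)|$ either equals $|k(\mv x,\mv y_1)-k(\mv x,\mv y_2)|$ or is zero. For the non-trivial piece, the standard computation using the regularity \eqref{eq:kernelreg} together with $|\mv y_1-\mv y_2|\lesssim \ell$ and $|\mv x-\mv y_1|\gtrsim \ell$ for $\mv x\in (3\mv Q)^c$ gives
\begin{equation*}
  \int_{(3\mv Q)^c} \frac{|\mv y_1-\mv y_2|^\gamma}{|\mv x-\mv y_1|^{n+1+\gamma}}\,d\mv x \lesssim 1,
\end{equation*}
which is the classical bound.

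The remaining and only new piece is the slab $\{\mv x=(t,x):t\in J\}\cap (3\mv Q)^c$, on which $k^+(\mv x,\mv y_1)$ and $k^+(\mv x,\mv y_2)$ disagree and the integrand equals $|k(\mv x,\mv y_j)|$ for the appropriate $j$. Since $t\in J\subset I\subset (t_0-\ell,t_0+2\ell)$, the condition $\mv x\notin 3\mv Q$ forces $x\notin 3R$ and hence $|x-y_j|\gtrsim \ell$. Using only the size bound \eqref{eq:kernelbound} together with $|\mv x-\mv y_j|\ge |x-y_j|$, I can estimate
\begin{equation*}
  \int_J\!\!\int_{|x-y_j|>\ell} \frac{dx\,dt}{|x-y_j|^{n+1}} \lesssim |J|\cdot \frac{1}{\ell}\le 1,
\end{equation*}
where the $t$-integral contributes a factor $|J|\le \ell$ that exactly compensates for the loss of one power of decay in the $x$-integration. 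This is the only place where the causal structure is used essentially, and the thinness of the slab is what saves the estimate despite the absence of kernel regularity across the diagonal $t=s$.

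Finally, the transposed estimate with $|k^\pm(\mv y_1,\mv x)-k^\pm(\mv y_2,\mv x)|$ is handled by exactly the same decomposition, now splitting by the position of the $t$-component of $\mv x$ relative to $s_1,s_2$ (which now sit in the \emph{first} slot of the kernel), using the second half of \eqref{eq:kernelreg} on the main part and the same slab estimate with the size bound on the exceptional set. I do not foresee any major obstacle; the only subtlety is keeping track of the orientation of the inequalities defining $k^\pm$ so that one identifies correctly on which of the three regions the two truncations agree or vanish.
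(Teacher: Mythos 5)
Your proof is correct and follows essentially the same approach as the paper: split the integration over $(3\mv Q)^c$ into the region where $t$ lies outside the $s$-range (where the causal truncation is either the full kernel $k$ or zero, so kernel regularity \eqref{eq:kernelreg} applies classically) and the thin slab of width $\lesssim\ell$ where the truncations disagree, on which the size bound \eqref{eq:kernelbound} integrated over $x\notin 3R$ gives $\lesssim\ell^{-1}$, compensated by the slab width. The paper uses the coarser decomposition $t<a$, $t>b$, $a<t<b$ (with $\mv Q=(a,b)\times Q$), while you refine to $J=(\min s_i,\max s_i)$, but this is only a cosmetic difference.
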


\begin{proof}
  For $\mv Q= (a,b)\times Q$, this follows from using \eqref{eq:kernelreg}
  for $t<a$ and $t>b$, and using \eqref{eq:kernelbound} for
  $a<t<b$.
\end{proof}

Denote by $Mf$ the standard Hardy--Littlewood maximal function of $f$.
As in \cite{Lerner:16}, we also require a weak $L_1$ estimate of Lerner's maximal singular integral
$$
  M_{S^\pm} f(\mv x)= \sup_{\mv Q\ni \mv x} \| S^\pm(1_{(3\mv Q)^c}f)\|_{L_\infty(\mv Q)},
  \qquad \mv x\in \R^{1+n}_+,
$$
with supremum over dyadic cubes $\mv Q\subset \R^{1+n}_+$, into which 
$S^\pm$ maps from the complement of the enlarged (non-dyadic) cube
$3\mv Q$. 

\begin{prop}   \label{prop:weakL1}
  The causal CZOs $S^\pm$ and the maximal singular integrals $M_{S^\pm}$ 
  are all bounded from 
  $L_1(\R^{1+n}_+)$ to $L_{1,\infty}(\R^{1+n}_+)$.
\end{prop}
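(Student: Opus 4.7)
My plan is to establish both weak-$(1,1)$ bounds by classical techniques: the bound for $S^\pm$ follows from the Calder\'on--Zygmund theorem, while the bound for $M_{S^\pm}$ requires a Cotlar-type inequality taking the first result as input.

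For $S^\pm$, given $f \in L_1(\R^{1+n}_+)$ and $\lambda > 0$, I perform a dyadic Calder\'on--Zygmund decomposition $f = g + b$ at height $\lambda$, with bad cubes $\mv Q_j \in \mv\mD$ satisfying $\sum_j|\mv Q_j| \lesssim \|f\|_1/\lambda$, $\|g\|_2^2 \lesssim \lambda\|f\|_1$, and $b = \sum_j b_j$ with $\supp b_j \subset \mv Q_j$, $\int b_j = 0$, $\|b_j\|_1 \lesssim \lambda|\mv Q_j|$. The $L_2$-boundedness of $S^\pm$ handles $S^\pm g$ via Chebyshev, and for $S^\pm b$ one sets $\Omega^* = \bigcup_j 3\mv Q_j$ (for which $|\Omega^*| \lesssim \|f\|_1/\lambda$) and exploits $\int b_j = 0$ together with Lemma~\ref{lem:hormander} to obtain $\int_{(3\mv Q_j)^c} |S^\pm b_j|\,d\mv x \lesssim \|b_j\|_1$. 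Summing over $j$ and applying Chebyshev on $(\Omega^*)^c$ completes the weak-$(1,1)$ estimate.

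For $M_{S^\pm}$, I would establish the Cotlar-type inequality
$$
M_{S^\pm} f(\mv x) \lesssim \bigl(M(|S^\pm f|^\delta)(\mv x)\bigr)^{1/\delta} + Mf(\mv x), \qquad 0 < \delta < 1.
$$
Fixing $\mv x$ and a cube $\mv Q \ni \mv x$, I decompose, for a.e.\ $\mv z, \mv z' \in \mv Q$,
$$
S^\pm(1_{(3\mv Q)^c} f)(\mv z) = S^\pm f(\mv z') - S^\pm(1_{3\mv Q} f)(\mv z') + \bigl[S^\pm(1_{(3\mv Q)^c} f)(\mv z) - S^\pm(1_{(3\mv Q)^c} f)(\mv z')\bigr].
$$
The oscillation term in brackets is bounded by $Mf(\mv x)$ via a dyadic annular decomposition of $(3\mv Q)^c$: on the smooth part, where $\mv z, \mv z'$ lie on the same side of the discontinuity $\{t=s\}$, the regularity \eqref{eq:kernelreg} yields $2^{-k\gamma}$-decay, and on the thin jump slab $\{(s, y) : \min(t_z, t_{z'}) < s < \max(t_z, t_{z'})\}$, whose $s$-thickness is $\lesssim \ell(\mv Q)$, the kernel size bound \eqref{eq:kernelbound} together with an iterated maximal argument (reflecting that the slab has codimension $1$ in $\R^{1+n}$) delivers the geometric decay $2^{-k}$ in $k$. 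Chebyshev applied separately to $|S^\pm f|^\delta$ and to $|S^\pm(1_{3\mv Q} f)|$ (using the weak-$(1,1)$ of $S^\pm$ from the first step) then produces a $\mv z' \in \mv Q$ at which the first two terms on the right are controlled by $(M|S^\pm f|^\delta)^{1/\delta}(\mv x)$ and $Mf(\mv x)$, respectively.

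The weak-$(1,1)$ of $M_{S^\pm}$ then follows by splitting $\{M_{S^\pm} f > \lambda\}$ according to the two terms in the Cotlar inequality. The $Mf$ term is handled by the classical weak-$(1,1)$ of $M$, while for the other term the weak-$(1,1)$ bound on $S^\pm$ gives $|S^\pm f|^\delta \in L_{1/\delta,\infty}$ with norm $\lesssim \|f\|_1^\delta$, so that $M(|S^\pm f|^\delta)$ lies in the same weak space with comparable norm, and raising to the $1/\delta$ power recovers the required bound. The main obstacle is the oscillation estimate across $\{t = s\}$: the thin jump slab lacks any kernel regularity, and it is only the codimension-$1$ geometry combined with the kernel size bound \eqref{eq:kernelbound} that yields the dyadic summability needed to dominate the contribution by $Mf(\mv x)$.
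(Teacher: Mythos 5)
Your overall architecture matches the paper's: the weak-$(1,1)$ bound for $S^\pm$ via Calder\'on--Zygmund decomposition and Lemma~\ref{lem:hormander} is exactly the paper's argument, and your Cotlar-type inequality for $M_{S^\pm}$ (splitting $f=1_{3\mv Q}f+1_{(3\mv Q)^c}f$ and comparing to an auxiliary point in $\mv Q$, then treating the mean of $|S^\pm f|^\delta$ by Kolmogorov) is also the paper's strategy.

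However, there is a genuine gap in your treatment of the oscillation term across the jump slab $\{\min(t_z,t_{z'})<s<\max(t_z,t_{z'})\}$: the claimed ``geometric decay $2^{-k}$'' and conclusion ``bounded by $Mf(\mv x)$'' are mutually incompatible. If you bound $\int_{\text{slab}_k}|f|$ by the $L^1$ norm of $f$ on the ball $B(\mv x_0,2^{k+1}\ell)$, you obtain a contribution $\lesssim Mf(\mv x_0)$ at each scale $k$ with \emph{no} geometric decay, so the sum over $k$ diverges. If instead you exploit the thinness of the slab (extent $\ell$ in $s$ versus $2^k\ell$ in $y$), you do get the factor $2^{-k}$, but then the relevant average is over a flat rectangle of dimensions $\ell\times(2^k\ell)^n$, which is controlled by an anisotropic (restricted strong) maximal function, \emph{not} by the isotropic $Mf$. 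Indeed one can produce $f$ (e.g.\ $f=\sum_{j\le J}2^j 1_{\{|u|<1,\,2^j<|z|<2^{j+1}\}}$) for which the slab integral is $\approx J$ while $Mf$ at the origin is $O(1)$, so the slab contribution is genuinely not dominated pointwise by $Mf$. The paper resolves this by introducing the kernel
$\Phi(u,z)=(1+|\mv z|)^{-(1+n+\gamma)}$ for $|u|>1$ and $\Phi(u,z)=(1+|\mv z|)^{-(1+n)}$ for $|u|<1$ (with critical, non-integrable-majorant decay in the slab), bounding the whole oscillation term by $M_\Phi f(\mv x_0)=\sup_t |f|*t^{-(n+1)}\Phi(\cdot/t)(\mv x_0)$, and invoking \cite[Sec.~II.4, Prop.~2]{SteinHA:93} for the weak-$(1,1)$ boundedness of $M_\Phi$ — a nontrivial input that does not follow from the standard ``radially decreasing integrable majorant'' lemma. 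Your Cotlar inequality should therefore have a third term $M_\Phi f$ (not subsumed in $Mf$), and the missing piece in your argument is precisely the weak-$(1,1)$ boundedness of this anisotropic maximal operator.
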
 

\begin{proof}
(1) The weak $L_1$ estimate for $S^\pm$ itself follows from the standard
proof employing the Calder\'on--Zygmund decomposition, since this only 
requires the H\"ormander regularity estimate from Lemma~\ref{lem:hormander}.

(2) Following the standard proof of Cotlar's lemma, see for example
\cite[Sec. 7.7]{CoifmanMeyer2:97}, we write 
$$
  f_1= 1_{3\mv Q}f\quad\text{and}\quad f_2= 1_{(3\mv Q)^c}f,
$$
for a cube $\mv Q$ with center $\mv x_0$.
We estimate $S^\pm(1_{(3\mv Q)^c}f)= S^\pm f_2$ at each fixed $\mv x_1\in \mv Q$, by
comparing it to the value at a variable point $\mv x\in\mv  Q$, writing
$$
  S^\pm f_2(\mv x_1)= (S^\pm f_2(\mv x_1)-S^\pm f_2(\mv x))+ S^\pm f(\mv x)- S^\pm f_1(\mv x)
  =: I+II+III.
$$
Raising the terms to power $1/p$, $p>1$, and taking the average over $\mv Q$,
terms $II$ and $III$ are estimated in the usual way:
$\barint_{\mv Q} |S^\pm f|^{1/p}  d\mv x  \lesssim M(|S^\pm f|^{1/p})(\mv x_0)$, and
$\barint_{\mv Q} |S^\pm f_1|^{1/p}  d\mv x \lesssim |\mv Q|^{-1/p} \|S^\pm f_1\|_{L_{1,\infty}}^{1/p} \lesssim |\mv Q|^{-1/p}\|f_1\|_{L_1}^{1/p}\lesssim (Mf(\mv x_0))^{1/p}$
by Kolmogorov's inequality and weak $L_1$ estimate (1) above.

For $I$, we define
$$
  \Phi(\mv z)=\Phi(u,z)= 
  \begin{cases}
    (1+|\mv z|)^{-(1+n+\gamma)}, & u\in(-\infty,-1)\cup (1,\infty),\\
    (1+|\mv z|)^{-(1+n)}, & u\in(-1,1).
  \end{cases}
$$
Using \eqref{eq:kernelreg} and \eqref{eq:kernelbound}
$$
  |(S^\pm f_2(\mv x_1)-S^\pm f_2(\mv x))|\lesssim M_\Phi f(\mv x_0),
$$
where $M_\Phi$ is the maximal operator
$$
  M_\Phi f= \sup_{t>0}( |f|* t^{-(n+1)}\Phi(\cdot /t) ). 
$$
By \cite[Sec. II.4, Prop. 2]{SteinHA:93}, $M_\Phi$ is weak-type (1,1).
This completes the proof, since we have
$$
  |M_{S^\pm}f(\mv x_0)|\lesssim M_\Phi f(\mv x_0)+ M(|S^\pm f|^{1/p})(\mv x_0)^p
  + Mf(\mv x_0)
$$
and $M$ is bounded on $L_{p,\infty}$ and weak-type (1,1).
\end{proof}

\section{Causal sparse domination of $S^-$}   \label{sec:causaldom}

To state our causal sparse domination,
we need the following subsets of the neighbourhood $3\mv Q$.
For $\mv Q= (a,a+\ell)\times Q\in \mv\mD$, $Q\in\mD$, we define
the upper and lower halves of $\mv Q$
\begin{align*}
  \mv Q^u &= (a+\ell/2, a+\ell)\times Q,\\
  \mv Q^l &= (a,a+\ell/2)\times Q, 
\end{align*}
the parts of  $(a,a+2\ell)\times 3Q$  above and below $t= a+\ell/2$ 
\begin{align*}
    \mv Q^T &= (a+\ell/2, a+2\ell)\times 3Q,\\
  \mv Q^L &= (a,a+\ell/2)\times 3Q, 
\end{align*}  
and  the part of $3\mv Q\setminus \clos{\mv Q^l}$ above $t=a$ 
$$
  \mv Q^\sqcap =((a,a+2\ell)\times 3Q) \setminus \clos{\mv Q^l}.
$$
For $\mv x=(t,x)\in\mv Q$ we let
$$
  \mv Q^\sqcap_{\mv x}=
   \mv Q^\sqcap\cap \sett{(s,y)}{s>\min(a+\ell/2,t)},
$$
so that $\mv Q^T\subset \mv Q^\sqcap_{\mv x}\subset \mv Q^\sqcap$.
The following sparse estimate is a causal adaptation of the estimate
in \cite{Lerner:16}.
We specifically note that the very argument of \cite{Lerner:16} seemed more amenable to this adaptation than either its predecessors or successors in the sparse domination literature.

\begin{thm}  \label{thm:sparsecausal}
Let $S^-$ be a downward mapping causal Calder\'on--Zygmund operator
as in Section~\ref{sec:causalSIOs}.
Let $f\in L_1(\R^{1+n})$ have bounded support.
Then there exists a $1/4$-sparse family of cubes $\mv\mD_f\subset \mv \mD$
such that 
$$
  |S^-f(\mv x)|\lesssim\sum_{\mv Q\in \mv\mD_f, \mv Q\ni \mv x} \barint_{\mv Q^\sqcap_{\mv x}}|f(\mv y)| d\mv y,
  \qquad\text{for a.e. } \mv x\in \R^{1+n}_+.
$$
\end{thm}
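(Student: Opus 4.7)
The plan is to follow Lerner's recursive sparse-domination framework~\cite{Lerner:16}, adapted to the causal operator $S^-$. Everything reduces to a recursive lemma of the following form: for each dyadic $\mv Q\in\mv\mD$ and each $g\in L_1$ with $\supp g\subset\mv Q^\sqcap$, there is a pairwise disjoint family $\{\mv Q_j\}$ of dyadic subcubes of $\mv Q$ satisfying $\sum_j|\mv Q_j|\le\tfrac12|\mv Q|$ and
\[
|S^- g(\mv x)|\le C\barint_{\mv Q^\sqcap_{\mv x}}|g(\mv y)|\,d\mv y\cdot 1_{\mv Q\setminus\bigcup_j\mv Q_j}(\mv x)+\sum_j 1_{\mv Q_j}(\mv x)\,|S^-(g\cdot 1_{\mv Q_j^\sqcap})(\mv x)|
\]
for a.e.\ $\mv x\in\mv Q$. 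Iterating on each pair $(\mv Q_j,\, g\cdot 1_{\mv Q_j^\sqcap})$ produces the $\tfrac14$-sparse family $\mv\mD_f$ together with the claimed pointwise bound. The very first application is to a top cube $\mv Q_0\in\mv\mD$ chosen large enough that $\supp f\subset\mv Q_0^\sqcap$, which is possible because $\supp f$ is bounded (using finitely many top generation cubes if required).

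For the recursive lemma, I would introduce the bad set
\[
E=\bigsett{\mv x\in\mv Q}{M(g)(\mv x)+M_{S^-}(g)(\mv x)>C_0\barint_{\mv Q^\sqcap}|g(\mv y)|\,d\mv y}
\]
and take $\{\mv Q_j\}$ to be the maximal dyadic subcubes $\mv R\subset\mv Q$ with $|\mv R\cap E|>\tfrac12|\mv R|$. The weak $(1,1)$ bounds of $M$ and of $M_{S^-}$ from Proposition~\ref{prop:weakL1} give $|E|\le\tfrac14|\mv Q|$ for $C_0$ large enough, and hence $\sum_j|\mv Q_j|\le 2|E|\le\tfrac12|\mv Q|$. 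For $\mv x\in\mv Q_j$, splitting $S^- g=S^-(g\cdot 1_{\mv Q_j^\sqcap})+S^-(g\cdot 1_{\mv Q^\sqcap\setminus\mv Q_j^\sqcap})$ isolates the recursive summand, while the second summand is controlled by $M_{S^-}(g)(\mv x^*)$ at an auxiliary Lebesgue point $\mv x^*\in\mv Q_j\setminus E$ combined with the H\"ormander oscillation bound from Lemma~\ref{lem:hormander} to go from $\mv x^*$ to $\mv x$. For a.e.\ $\mv x\in\mv Q\setminus\bigcup_j\mv Q_j$ one has $\mv x\notin E$, and the standard Lerner-type estimate gives the crude bound $|S^- g(\mv x)|\lesssim\barint_{\mv Q^\sqcap}|g|$.

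The main work is then the causal refinement from $\barint_{\mv Q^\sqcap}|g|$ to the smaller, $\mv x$-dependent $\barint_{\mv Q^\sqcap_{\mv x}}|g|$. Since the kernel of $S^-$ vanishes in $\{t>s\}$, one has $S^- g(\mv x)=S^-(g\cdot 1_{\{s>t\}})(\mv x)$, and combined with $\supp g\subset\mv Q^\sqcap$ this equals $S^-(g\cdot 1_{\mv Q^\sqcap_{\mv x}})(\mv x)$ for $\mv x\in\mv Q^l$ and $S^-(g\cdot 1_{\mv Q^T})(\mv x)$ for $\mv x\in\mv Q^u$ (where $\mv Q^\sqcap_{\mv x}=\mv Q^T$). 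Rerunning the bad-set construction on the causally truncated function $g\cdot 1_{\{s>\min(a+\ell/2,\, t_{\mv x})\}}$ in place of $g$ should then yield the sharper estimate with $\barint_{\mv Q^\sqcap_{\mv x}}|g|$ on the right. The main obstacle is that this truncation depends on the point $\mv x$, whereas the Calder\'on--Zygmund machinery is most natural for a fixed $L_1$ function; I expect to handle it by decomposing $\mv Q^l$ into dyadic time-slabs (which are themselves candidate cubes at the next recursion level), running the argument uniformly on each slab, and absorbing the residual $t$-dependence into the next iteration. The calibration $\mv Q^\sqcap_{\mv x}=\mv Q^\sqcap\cap\{s>\min(a+\ell/2,t)\}$ is precisely chosen so that $|\mv Q^\sqcap_{\mv x}|$ stays comparable to $|\mv Q^T|$ across $\mv Q^l$, which keeps all relevant constants uniformly bounded and allows the refined causal average to be seamlessly inherited by the dominating sparse sum.
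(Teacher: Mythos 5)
Your overall blueprint --- a recursive lemma built on a bad set $E$ controlled via the weak $(1,1)$ bounds of Proposition~\ref{prop:weakL1}, a stopping family $\{\mv Q_j\}$ of density $>1/2$ in $E$, the measure budget $\sum_j|\mv Q_j|\leq 2|E|\lesssim|\mv Q|$, and the causality identity $S^-g(\mv x)=S^-(g1_{\mv Q^T})(\mv x)$ for $\mv x\in\mv Q^u$ --- correctly identifies the main ingredients of the paper's proof. But the way you propose to assemble them leaves the central difficulty, namely obtaining the $\mv x$-dependent average $\barint_{\mv Q^\sqcap_{\mv x}}|g|$ in the lower half $\mv Q^l$, unresolved, and I do not see how the ``time-slab'' fix as stated can close the gap.

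The problem, as you acknowledge, is that on $\mv Q^l$ the causal truncation $g1_{\{s>t\}}$ varies with $\mv x$, while a bad-set construction produces one fixed stopping family and one fixed control quantity. Running your argument with $M(g)+M_{S^-}(g)$ and threshold $\barint_{\mv Q^\sqcap}|g|$ only yields $|S^-g(\mv x)|\lesssim\barint_{\mv Q^\sqcap}|g|$ off the stopping cubes, strictly weaker than the claim. Rerunning the construction on the $\mv x$-dependent truncation makes the stopping family $\mv x$-dependent, so no single sparse collection is produced. And the time-slab idea does not repair this: the truncation level $t$ still varies by a factor $2$ within a slab, and a thin slab of $\mv Q^l$ is not a cube of $\mv\mD$, so it cannot be ``a candidate cube at the next recursion level''. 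A separate problem is the choice of $g1_{\mv Q_j^\sqcap}$ as the recursive input: for $\mv Q_j\subset\mv Q^u$ the remainder $S^-(g1_{\mv Q^\sqcap\setminus\mv Q_j^\sqcap})(\mv x)$ still contains the singularity of $S^-$ at $\mv x\in\mv Q_j^l$, since $\mv Q_j^l\subset\mv Q^\sqcap\setminus\mv Q_j^\sqcap$ may carry mass of $g$ arbitrarily close to $\mv x$ within $\{s>t\}$; a Lerner-type scheme must pass the full $1_{3\mv Q_j}f$ to the next level, not a causally pruned version.

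What the paper does, and what your plan misses, is an \emph{asymmetric} treatment of the two halves of $\mv Q$. The stopping construction is run only in $\mv Q^u$, and the bad set is defined via $|S^-f^T|$ and $M_{S^-}f^T$ exceeding $c\barint_{\mv Q^T}|f|$, with $f^T=1_{\mv Q^T}f$; since on $\mv Q^u$ one has $S^-f=S^-f^T$ exactly and $\mv Q^\sqcap_{\mv x}=\mv Q^T$ there, this gives precisely the right $\mv x$-independent bound with no loss. In $\mv Q^l$ there is no stopping at all: one takes \emph{all} $2^n$ dyadic children $\mv R_i$ of $\mv Q$ contained in $\mv Q^l$ and iterates on each. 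For $\mv x\in\mv R_i$, the tail $S^-(1_{3\mv Q\setminus 3\mv R_i}f)(\mv x)$ integrates (after using causality) over a subset of $\mv Q^\sqcap_{\mv x}$ at distance $\gtrsim\ell(\mv Q)$ from $\mv x$, so the size bound~\eqref{eq:kernelbound} alone produces $\barint_{\mv Q^\sqcap_{\mv x}}|f|$; this crucially uses that $\ell(\mv R_i)=\ell(\mv Q)/2$ so that $3\mv R_i\supset\mv Q^l$, and it would break for arbitrarily small stopping cubes inside $\mv Q^l$. The $2^n$ children contribute exactly $|\mv Q|/2$ to the measure budget and the stopping cubes in $\mv Q^u$ contribute $\leq|\mv Q|/4$ for large $c$, giving $1/4$-sparseness. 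This split --- deterministic dyadic children in $\mv Q^l$, stopping time for $f^T$ in $\mv Q^u$ --- is the structural departure from the standard Lerner argument that your proposal needs but does not supply.
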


We remark that an essential point in Theorem~\ref{thm:sparsecausal} is that the
sum uses
the same dyadic system as we start out with.
If we allow us to replace $\mv\mD$ by a finite number of other dyadic
systems, then the result is immediate from \cite{Lerner:16}, 
but cannot be used to prove our Carleson bounds.

We also remark that the one-dimensional result, $n=0$, is somewhat cleaner.
In this case $\mv\mD$ is the standard dyadic intervals 
and for $\mv Q=(a,a+\ell)$, the average in the sparse sum
is over 
$(a+\ell/2, a+2\ell)$, independent of $\mv x\in\mv Q$, as in \eqref{eq:onedim}. 
The reason why $Q^T$, which is the direct analogue of 
$(a+\ell/2, a+2\ell)$ for $n\ge 1$, must be replaced by 
the larger set $\mv Q^\sqcap_{\mv x}$ in 
Theorem~\ref{thm:sparsecausal}, is the term $II_1$ appearing
in its proof.
The even larger set $\mv Q^\sqcap$ is too large, since it 
cannot control the term $II_1$ appearing in the later proof 
of  Theorem~\ref{thm:mainthm}: it is important that these
sets do not touch $t=a$.
However, we have found that Theorem~\ref{thm:mainthm} is valid
when $\mv Q^\sqcap_{\mv x}$ is enlarged to a set,
to be denoted by $\mv Q^\Box_{\mv x}$, which goes
down to the $t$ coordinate of $\mv x$ also inside $\mv Q$.

\begin{proof}
(1)
Fix $\mv Q\in\mv\mD$ and assume that the closure of $3\mv Q$ contains the support of $f$.
Write $f^T:= 1_{\mv Q^T}f$  and define the set 
\begin{multline*}
  E:=\sett{\mv x\in\R^{1+n}_+}{|S^-f^T(\mv x)|> c\barint_{\mv Q^T} |f|d\mv y}\\
  \cup
  \sett{\mv x\in\R^{1+n}_+}{|M_{S^-}f^T(\mv x)|> c\barint_{\mv Q^T} |f|d\mv y}.
\end{multline*}
Let $\mv R_1, \ldots, \mv R_{2^n}$ denote  all the children 
of $\mv Q$ (maximal subcubes strictly contained in $\mv Q$) 
contained in $\mv Q^l$, and let $\mv Q_1, \mv Q_2, \ldots$ be an enumeration of all the maximal subcubes of $\mv Q^u$ such that 
\begin{equation}   \label{eq:stoppingcond}
  |\mv Q_j\cap E|>\alpha |\mv Q_j|.
\end{equation}
Here $c>0$ and $\alpha\in(0,1)$ are parameters to be fixed below.
We obtain a family $\{\mv R_i, \mv Q_j\}$ of disjoint subcubes of $\mv Q$  that give us a disjoint union
\begin{equation*}
  \mv Q=\mv Q^l\cup\mv Q^u=\Big(\bigcup_i \mv R_i\Big)\cup\Big(\bigcup_j \mv Q_j\Big)\cup (\mv Q^u\setminus\bigcup_j \mv Q_j),
\end{equation*}
Hence, splitting the indicator $1_{\mv Q}$ outside $S^-$ in the first step and the function $f$ inside $S^-$ in the second one,
\begin{equation*}
\begin{split}
  1_{\mv Q}S^- f
  &=\sum_i 1_{\mv R_i}S^- f+\sum_j 1_{\mv Q_j}S^- f+1_{\mv Q^u\setminus \bigcup \mv Q_j}S^-f  \\
  &=\sum_i 1_{\mv R_i}S^- (1_{3\mv R_i}f)+\sum_j 1_{\mv Q_j}S^- (1_{3\mv Q_j}f)+ \\
  &\qquad+\sum_i 1_{\mv R_i}S^- (1_{3\mv Q\setminus 3\mv R_i}f)
  + 1_{\mv Q^u\setminus \bigcup \mv Q_j}S^- f^T +\sum_j 1_{\mv Q_j}S^- (1_{3\mv Q\setminus 3\mv Q_j}f^T) \\
  &=:I_1+I_2+ II_1+II_2+II_3.
\end{split}
\end{equation*}
Using causality for $S^-$, we have replaced $f$ by $f^T$ in terms $II_2$ and $II_3$. 

(2)
We next show that terms $II$ are bounded by
$\barint_{\mv Q^\sqcap_{\mv x}}|f|d\mv y$, 
for almost every  $\mv x\in\mv Q$.
For $II_1$, this follows from \eqref{eq:kernelbound}
and the downward mapping property of $S^-$  since
$|\mv x-\mv y|^{-(1+n)}\lesssim |\mv Q_{\mv x}^\sqcap|^{-1}$
for all $\mv y\in (3\mv Q)\setminus(3\mv R_i)$ and $\mv x\in\mv R_i$. 
For $II_2$, it follows from Lebegue's differentiation theorem 
that 
$$
  E\cap \mv Q^u\subset\bigcup_j \mv Q_j
$$
modulo a set of measure zero.
The estimate  of $II_2$ then follows from the definition of $E$. 

For $II_3$, consider one of the stopping cubes $\mv Q_j$.
By definition, we have $|\mv Q_j\cap E|>\alpha |\mv Q_j|$.
We require an upper estimate of the measure of $E$.
It follows from Proposition~\ref{prop:weakL1}
that 
$$
  |E|\lesssim \left(c\barint_{\mv Q^T} |f|d\mv y\right)^{-1} \int_{\R^{1+n}_+} |f^T|d\mv y
  \approx |Q|/c.
$$
Moreover, if $\mv Q_j^p$ denotes the dyadic parent of $\mv Q_j$ 
(the minimal dyadic cube such that $\mv Q_j^p\supsetneqq \mv Q_j$),
 then by maximality of $\mv Q_j$,  its parent does not satisfy
\eqref{eq:stoppingcond} and  we have
$$
|\mv Q_j\cap E|\le |\mv Q_j^p\cap E|\le \alpha |\mv Q_j^p|=
\alpha 2^{1+n} |\mv Q_j|.
$$
Let $\alpha= 2^{-2-n}$. Then in particular there exists 
$\mv x\in \mv Q_j\setminus E$  since $|\mv Q_j\setminus E|\ge |\mv Q_j|/2>0$ with this choice of $\alpha$. 
From the definition of $E$ it now follows that
$$
  |S^- (1_{(3\mv Q)\setminus(3\mv Q_j)} f^T)|
  \le M_{S^-}f^T (\mv x)\lesssim  \barint_{\mv Q^T} |f|d\mv y
$$
on almost all of $\mv Q_j$. 
This proves the estimate of $II_3$. 

(3)
We have shown that for $f$ supported on $3\mv Q$, we have
$$
 |1_{\mv Q} S^- f- \sum_i 1_{\mv R_i} S^- (1_{3\mv R_i}f)
  - \sum_j 1_{\mv Q_j} S^- (1_{3\mv Q_j} f)|\lesssim  \barint_{\mv Q^\sqcap_{\mv x}} |f|d\mv y
$$
at almost all $\mv x\in\mv Q$.
To conclude the proof, we iterate this estimate for all
the subcubes $\mv R_i$ and $\mv Q_j$.
To this end, we note that 
$$
  \sum_i |\mv R_i|= |\mv Q|/2
$$
and, since $\bigcup \mv Q_j\subset\{M(1_E)>\alpha\}$, that
$$
\sum_j|\mv Q_j|\lesssim \alpha^{-1} \int 1_E d\mv y= |E|/\alpha\lesssim |\mv Q|/(c\alpha).
$$
Fix $c$ large enough so that $\sum_i |\mv R_i|+\sum_j |\mv Q_j|\le 3|Q|/4$.
Define 
$$
  E_{\mv Q}:= \mv Q\setminus\left( \bigcup_i \mv R_i\cup \bigcup_j \mv Q_j  \right),
$$
so that $|E_{\mv Q}|\ge |\mv Q|/4$.

Finally consider a disjoint union
$$
  \R^{1+n}_+ = \bigcup_k \mv Q^k,
$$ 
modulo zero sets, where $\mv Q^k\in\mv \mD$ are constructed as follows.
Since $\supp(f)$ is bounded and $\mD$ is connected, we can choose
$\mv Q^1\in\mv\mD$ containing $\supp(f)$.
Then let $\mv P^1= \mv Q^1$ and recursively define 
$\mv P^{j+1}$ to be the dyadic parent of $\mv P^{j}$,
that is, the smallest dyadic cube such that 
$\mv P^{j+1}\supsetneqq \mv P^{j}$.
The siblings of $\mv P^{j}$ are the other dyadic subcubes of $\mv P^{j+1}$ of same size as $\mv P^{j}$.
Now choose $\mv Q^2, \mv Q^3, \ldots$ to be an ordering of all the siblings 
of all $\mv P^j$, $j=1,2,\ldots$. 
Then $\mv Q^1\subset 3\mv Q^k$ for all $k$, so that
$$
  S^-f= \sum_k 1_{\mv Q^k} S^-f= \sum_k 1_{\mv Q^k} S^-(1_{3\mv Q^k} f),
$$ 
and the estimates above apply to each $\mv Q= \mv Q^k$. 
We define the family of dyadic cubes $\mv \mD_f$ to be 
$\mv Q^1, \mv Q^2, \ldots$ along with, for each of $\mv Q^k$, all generations
of  stopping cubes $Q_j$ and subcubes $R_i$  starting from $\mv Q=\mv Q^k$, constructed as  in (1)  above.
It follows that $\mv \mD_f$ is $1/4$ sparse and that we have the stated
sparse bound of $S^-f$.
\end{proof}

\section{Bounds for causal CZOs}  \label{sec:boundofsparse}

The Carleson bounds established in this section apply to slightly
larger causal sparse operators
\begin{equation}  \label{eq:largersparse}
  |\widehat S f(\mv x)|\lesssim\sum_{\mv Q\in\widehat{\mv\mD}, \mv Q\ni \mv x} \barint_{\mv Q^\Box_{\mv x}}|f(\mv y)| d\mv y,
  \qquad \mv x\in \R^{1+n}_+,
\end{equation}
where we have replaced the region $\mv Q^\sqcap_{\mv x}$
appearing in the sparse operator in Theorem~\ref{thm:sparsecausal},
by the slightly larger region 
$$
  \mv Q^\Box_{\mv x}=
  (\min(a+\ell/2,t), a+2\ell)\times 3Q,
$$ 
for $\mv x=(t,x)\in\mv Q=(a,a+\ell)\times Q$.
We also write
$$
   \mv Q^\Box=(a, a+2\ell)\times 3Q
$$ 
for the union of $Q^T$ and $Q^L$, modulo a set of measure zero. 
The following theorem is the main result of this paper. 
Its proof uses regions $\mv R^\Box$ for 
dyadic cubes $\mv R\subset Q^w$, where $Q^w$
is a dyadic Whitney region.
We define enlarged auxiliary Whitney regions
$$
\widetilde Q^w=(\ell(Q)/2,3\ell(Q)/2)\times (2Q),
$$
so that $\mv R^\Box\subset \widetilde Q^w$
whenever $\mv R\subset Q^w$. 

\begin{thm}     \label{thm:mainthm}
Let $S^+$ and $S^-$ be causal Calder\'on--Zygmund operators
as in Section~\ref{sec:causalSIOs}
  and  $1<p,q< \infty$. 
Then we have the estimates
\begin{align}
  \|CW_q(S^-f)\|_p &\lesssim \|CW_q(f)\|_p, \label{eq:pqestSmin} \\
  \|NW_{q'}(S^+f)\|_{p'} &\lesssim \|NW_{q'}(f)\|_{p'}. \label{eq:pqestSplus}
\end{align} 
More precisely, the estimate
\begin{equation}   \label{eq:CWforShat}
  C_{\mD}W_{\mD,q} (\widehat S f)\lesssim C_\mD \widetilde W_{\mD,q}(f)+ C(f)
\end{equation}
holds pointwise on $\R^n$ for the causal sparse operator $\widehat S$
from \eqref{eq:largersparse}, for any $\eta$-sparse collection
$\widehat {\mv\mD}\subset\mv\mD$ with $0<\eta<1$.
Here  $\widetilde W_{\mD,q}(f)$ denotes the Whitney $L_q$ average 
over $\widetilde Q^w$. 
\end{thm}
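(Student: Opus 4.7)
The plan is to deduce both norm estimates from the pointwise bound~\eqref{eq:CWforShat}. For~\eqref{eq:pqestSmin}, Theorem~\ref{thm:sparsecausal} gives $|S^- f| \lesssim \widehat S|f|$ (note the containment $\mv Q^\sqcap_{\mv x} \subset \mv Q^\Box_{\mv x}$), and taking $L_p(\R^n)$ norms in \eqref{eq:CWforShat} reduces matters to $\|C_\mD\widetilde W_{\mD,q}f\|_p + \|Cf\|_p \lesssim \|CW_q f\|_p$: the first summand is comparable to $\|CW_q f\|_p$ since $\widetilde Q^w$ is covered by boundedly many Whitney regions of cubes near $Q$ (combined with the aperture- and shape-independence of these norms recalled in Section~\ref{sec:counterex}), while $Cf \le C_\mD W_{\mD,1} f \le C_\mD W_{\mD,q} f$ pointwise by H\"older on Whitney regions. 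Estimate~\eqref{eq:pqestSplus} then follows by duality: the adjoint of an upward-mapping rough CZO is downward-mapping, so $(S^+)^* = S^-$, and the $N$--$C$ tent-space duality of~\cite{HytonenRosen:13} transfers the $CW_q$-bound for $S^-$ to the $NW_{q'}$-bound for $S^+$.

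To prove~\eqref{eq:CWforShat}, I fix $x_0 \in \R^n$ and a dyadic $P \ni x_0$ and expand $|P|^{-1}\sum_{Q \subset P}|Q^w|^{1-1/q}\|\widehat S f\|_{L_q(Q^w)}$. The key dyadic observation is that for $\mv x = (t,x) \in Q^w$, any $\mv R \in \mv\mD$ containing $\mv x$ with $\ell(\mv R) \ge \ell(Q)$ is automatically a Carleson cube $R^\ca$ for some ancestor $Q \subseteq R$ (because $t < \ell(Q) \le \ell(\mv R)$ forces the time component of $\mv R$ to start at~$0$), while smaller $\mv R$ sit inside the Whitney sub-piece of $Q^w$ containing $\mv x$, so that $\mv R^\Box \subset \widetilde Q^w$ by design. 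I thus split
$$
\widehat S f = \widehat S^{\mathrm{in}} f + \widehat S^{\mathrm{top}} f + \widehat S^{\mathrm{anc}} f
$$
according to $\ell(\mv R) < \ell(Q)$, $\mv R = Q^\ca$, and $\mv R = R^\ca$ with $R \supsetneq Q$. The ``in'' piece is a sparse operator acting in $\widetilde Q^w$, so its $L_q$-boundedness ($q > 1$) gives $\|\widehat S^{\mathrm{in}} f\|_{L_q(Q^w)} \lesssim |\widetilde Q^w|^{1/q}\widetilde W_{\mD,q}(f)_Q$; summation over $Q \subset P$ produces the $C_\mD \widetilde W_{\mD,q}(f)(x_0)$ term. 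The ``top'' piece is constant in $\mv x \in Q^w$ and, after a single H\"older step on $(\ell(Q)/2,2\ell(Q))\times 3Q$, is absorbed into a sum of $\widetilde W_{\mD,q}(f)$ over an $O(1)$-neighborhood of $Q$ (or, alternatively, directly into $Cf$ via $L_1$ H\"older).

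The main obstacle is the ancestral piece
$$
\widehat S^{\mathrm{anc}} f(\mv x) = \sum_{R \supsetneq Q,\ R^\ca \in \widehat{\mv\mD}} \frac{1}{|R^\ca|}\int_{(t, 2\ell(R)) \times 3R} |f|,
$$
which must be absorbed into $Cf(x_0)$ despite involving a potentially logarithmic sum over all ancestral levels of $Q$. Monotonicity in $t$ gives $\|\widehat S^{\mathrm{anc}} f\|_{L_q(Q^w)} \le |Q^w|^{1/q}\widehat S^{\mathrm{anc}} f|_{t=\ell(Q)/2}$, and interchanging the $Q$- and $R$-summations in the resulting Carleson sum yields
$$
|P|^{-1}\sum_{R\in\mD,\ R^\ca \in \widehat{\mv\mD}}\frac{1}{|R^\ca|}\int_{3R \times (0,2\ell(R))}\Bigl(\sum_{\substack{Q \subset P,\ Q \subsetneq R \\ \ell(Q) < 2s}}|Q^w|\Bigr)|f(s,y)|\,d\mv y.
$$
The crucial cancellation is that, for fixed $\mv y=(s,y)$, the inner Whitney sum (constrained by $\ell(Q)<2s$) equals $\approx \min(s,\ell(R))\,\ell(R)^n$ when $R\subset P$ and $\approx \min(s,\ell(P))\,|P|$ when $R \supsetneq P$, which trades a would-be logarithmic count of ancestral levels for a convergent geometric sum against the radial weight $|R^\ca|^{-1}$. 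Splitting the $R$-sum at $R = P$ and performing a dyadic-annular decomposition in $\mv y$, each resulting piece is controlled by an average of $|f|$ over an enlarged Carleson cube containing $x_0$, hence by $Cf(x_0)$. Balancing these two geometric sums so that the final outcome is $O(Cf(x_0))$ is the delicate computational core of the argument.
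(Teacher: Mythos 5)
Your proposal follows essentially the same route as the paper's proof: deduce the norm bounds from the pointwise estimate~\eqref{eq:CWforShat} via Theorem~\ref{thm:sparsecausal}, density, and duality, then establish \eqref{eq:CWforShat} by splitting $\widehat{S}f$ on each Whitney region $Q^w$ into local cubes $\mv R\subset Q^w$ (handled by a standard sparse $L_q$ duality estimate yielding $\widetilde W_{\mD,q}(f)$) and ancestral Carleson cubes $R^\ca\supset Q^\ca$ (handled by interchanging the $Q$- and $R$-sums and the cancellation $\sum_{Q\subset R}|Q^w|\,1_{\ell(Q)<2s}\lesssim s|R|$, exactly the key point you identify). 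Your three-way split and the cut at $R\supsetneq P$ correspond to the paper's $I$, $II_1$, $II_2$ up to cosmetic regrouping.
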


 We remark that interior regularity $1<q<\infty$ is necessary 
for boundedness of CZOs, and is used to ensure boundedness of the 
Hardy--Littlewood maximal operator in the proof below, 
while $p>1$ is necessary for the 
Carleson and non-tangential maximal norms to be non-degenerate.
The restriction $p<\infty$ is less essential, but is made so that the
following lemma hold.

\begin{lem}   \label{lem:densedual}
Fix $1<p,q<\infty$ and define Banach spaces
\begin{align*}
  Z &= \sett{f\in L_q^\loc(\R^{1+n}_+)}{\|CW_q(f)\|_p<\infty} 
  \quad\text{and}\\
  X &= \sett{f\in L_{q'}^\loc(\R^{1+n}_+)}{\|NW_{q'}(f)\|_{p'}<\infty}.
\end{align*}
Let $Z_0$ denote the subspace of functions 
$f\in L_q(\R^{1+n}_+)\cap Z$ with
bounded support, and let 
$X_0$ denote the subspace of functions 
$f\in L_{q'}(\R^{1+n}_+)\cap X$ with
bounded support.

Then $Z_0$ is dense in $Z$, in the norm $\|CW_q(\cdot)\|_p$, and
$X_0$ is dense in $X$, in the norm $\|NW_{q'}(\cdot)\|_{p'}$.
Moreover, $Z$ is non-reflexive and $X$ equals the dual space $Z^*$, 
using the standard $L_2(\R^{1+n}_+)$ pairing.
\end{lem}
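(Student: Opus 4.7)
The plan is to treat the four claims---density of $Z_0$ and $X_0$, duality $X=Z^*$, and non-reflexivity of $Z$---in turn. For both density assertions I would fix the increasing exhaustion $E_k:=B(0,k)\cap\R^{1+n}_+$ (Euclidean balls in $\R^{1+n}$) and set $f_k:=1_{E_k}f$. Then $f_k$ has bounded support, and a finite covering of $E_k$ by dyadic Whitney regions $R^w$ combined with the identity $\|f\|_{L_q(R^w)}=|R^w|^{1/q}W_q(f)(t_R,y_R)$ and the a.e.\ finiteness of $W_q(f)$ (respectively $W_{q'}(f)$) guaranteed by $f\in L_q^\loc$ (respectively $L_{q'}^\loc$) gives $f_k\in L_q(\R^{1+n}_+)$ (respectively $L_{q'}(\R^{1+n}_+)$), so $f_k\in Z_0$ (respectively $X_0$). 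For density in $Z$ I would invoke the equivalence $\|Cg\|_p\approx\|Ag\|_p$ recalled in Section~\ref{sec:counterex} for $1<p<\infty$, where the area integral $Ag(x)=\iint_{|y-x|<\alpha t}|g(t,y)|t^{-n}dt\,dy$ is additive in $g$; pointwise $W_q(1_{E_k^c}f)(t,y)\to 0$ by dominated convergence in the $L_q$-average, and two successive dominated convergences, first in the area integral (majorant $W_q(f)$) and then in $L_p(\R^n)$ (majorant $AW_q(f)\in L_p$), yield $\|CW_q(f-f_k)\|_p\lesssim\|AW_q(1_{E_k^c}f)\|_p\to 0$.

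For density of $X_0$ in $X$ the supremum in $N$ prevents any such additive reduction; the substitute I would use is a non-tangential upper bound on $W_{q'}(f)$ at infinity. Any $(t_0,y_0)$ with $W_{q'}(f)(t_0,y_0)\ge\lambda$ satisfies $B(y_0,\alpha t_0)\subset\{NW_{q'}(f)\ge\lambda\}$, and Chebyshev then forces $c_n(\alpha t_0)^n\le\lambda^{-p'}\|NW_{q'}(f)\|_{p'}^{p'}$, i.e.\ $W_{q'}(f)(t,y)\lesssim t^{-n/p'}$. Any $(t,y)$ in the cone at $x$ whose Whitney region meets $E_k^c$ must, by Whitney geometry and $|y-x|<\alpha t$, satisfy $t\gtrsim k$ once $k>2|x|$, so $NW_{q'}(f-f_k)(x)\lesssim k^{-n/p'}\to 0$ at a.e.\ $x$, and dominated convergence in $L_{p'}(\R^n)$ with majorant $NW_{q'}(f)$ closes the argument.

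The identification $X=Z^*$ via the standard $L_2(\R^{1+n}_+)$ pairing is the main duality theorem of \cite{HytonenRosen:13}, which I would simply cite. For non-reflexivity I plan to exhibit an $\ell^1$-isomorphic embedding into $Z$. Fix a cube $Q_0\subset\R^n$ and set $\varphi_j:=2^j 1_{(2^{-j-1},2^{-j})\times Q_0}$ for $j\ge 0$. A direct Whitney computation yields $W_q(\varphi_j)(t,y)\approx 2^j$ for $t\approx 2^{-j}$ and $y$ well inside $Q_0$; since the $t$-supports of the $\varphi_j$ are disjoint, disjointness of $L_q$-norms downstairs gives $W_q(\sum_j c_j\varphi_j)(t,y)\gtrsim|c_j|2^j$ on that strip. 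Integrating against $(t,y)\in Q_0^\ca$ produces $CW_q(\sum_j c_j\varphi_j)(x)\gtrsim\sum_j|c_j|$ for $x$ well inside $Q_0$, matched from above by sublinearity of $CW_q$, so $\|CW_q(\sum_j c_j\varphi_j)\|_p\approx|Q_0|^{1/p}\|(c_j)\|_{\ell^1}$. The isomorphic image of $\ell^1$ is a closed non-reflexive subspace of $Z$, so $Z$ itself is non-reflexive.

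The principal obstacle is the density in $X$, where one cannot appeal to additivity as for the Carleson norm and must instead exploit the Chebyshev-derived decay $W_{q'}(f)(t,y)\lesssim t^{-n/p'}$ forced by $NW_{q'}(f)\in L_{p'}$.
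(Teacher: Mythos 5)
Your plan diverges from the paper in almost every item: the paper simply cites \cite[Lem.~2.5]{HytonenRosen:13} for the density of $Z_0$ and \cite[Thm.~3.2]{HytonenRosen:13} for both non-reflexivity of $Z$ and the duality $X=Z^*$, and gives a self-contained argument only for the density of $X_0$. Your $\ell^1$-embedding for non-reflexivity is a clean alternative and appears correct: the Whitney slabs $\varphi_j$ are ``$\ell^1$-normalised'' in $Z$ essentially because $CW_q$ is an $L_1$-type functional in the $t$-direction and the $\varphi_j$ have disjoint $t$-supports, so $\|CW_q(\sum_j c_j\varphi_j)\|_p\approx |Q_0|^{1/p}\|(c_j)\|_{\ell^1}$, and a Banach space containing an isomorphic copy of $\ell^1$ cannot be reflexive.

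The density arguments, however, have a genuine gap. You approximate $f$ by $f_k=1_{E_k}f$ with $E_k=B(0,k)\cap\R^{1+n}_+$ and assert that $f_k\in L_q$ (resp.\ $L_{q'}$) by ``a finite covering of $E_k$ by dyadic Whitney regions.'' This is false: the half-ball $E_k$ touches the boundary $t=0$, near which the Whitney regions shrink, so $E_k$ requires infinitely many of them, and local $L_q$-integrability plus $f\in Z$ does not control their sum. A concrete obstruction for $Z$ is $f(t,x)=t^{-1/q}1_{[0,1]^n}(x)$: one checks $W_q(f)(t,x)\approx t^{-1/q}(\barint_{Q(x,t)}1_{[0,1]^n})^{1/q}$, hence $CW_q(f)\in L_p$ for $p$ not too close to $1$, yet $\int_{E_k}|f|^q\gtrsim\int_0^1 t^{-1}\,dt=\infty$, so $1_{E_k}f\notin L_q$. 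The right fix is to cut off away from the boundary as well (e.g.\ $E_k=\{1/k<t<k,\ |x|<k\}$), which makes $f_k\in L_q$ trivial from local integrability; your dominated-convergence argument via the area functional $A$ then goes through for $Z_0$. For $X_0$ this simple fix does \emph{not} work, because the non-tangential cone at $x$ reaches arbitrarily small $t$ and so $NW_{q'}(1_{\{t<1/k\}}f)(x)$ need not vanish; conversely your original spatial-only cutoff leaves the $L_{q'}$-membership of $f_k$ unproved (and it can genuinely fail, for instance by placing scale-$2^{-j}$ Whitney bumps of height $2^{j(n+1)/q'}$ stacked above the origin, for $p'$ small). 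This is precisely the difficulty the paper's proof addresses: after localising to $Q_0^\ca$ it additionally removes the (small-measure) union of Carleson boxes $Q^\ca$ on which $(W_{\mD,q'}f)_Q>1/\epsilon$; on the remainder the Whitney averages are bounded by $1/\epsilon$, which yields $\int|f_2|^{q'}\le\epsilon^{-q'}|Q_0^\ca|<\infty$ directly. Your Chebyshev decay $W_{q'}(f)(t,y)\lesssim t^{-n/p'}$ is a correct and useful observation for the \emph{tail} estimate, but does not substitute for this truncation of the large Whitney averages.
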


\begin{proof}
The density of $Z_0$ in $Z$ follows from 
\cite[Lem. 2.5]{HytonenRosen:13}, which proves that even 
compactly supported $L_q$ functions are dense in $Z$.
Also the non-reflexivity of $Z$ and 
that $X= Z^*$ was shown in \cite[Thm. 3.2]{HytonenRosen:13}.

To see the density of $X_0$ in $X$, let $\epsilon>0$ and 
$f\in X$ so that 
$\|N_{\mD}W_{\mD, q'}f\|_{p'}<\infty$.
Recall that equivalences of norms allow us to use the dyadic
versions of the norms.
Since $p>1$ and $\mD$ is connected, there exists $Q_0\in\mD$
such that 
$\int_{\R^n\setminus Q_0}|N_{\mD}W_{\mD, q'}f|^{p'}d\mv x\le \epsilon^{p'}$.
Let $\mD_f$ be the set of maximal cubes $Q\subset Q_0$ such that
$$
  (W_{\mD,q'}f)_Q>1/\epsilon.
$$
Define functions 
$$
f_0=\sum_{Q\in\mD_f} f1_{Q^\ca}\quad \text{and}\quad 
f_1= f1_{\R^{1+n}_+\setminus Q_0^\ca},
$$
and let $f_2= f-f_0-f_1$.
We have $N_{\mD}W_{\mD, q'}f_0= 
1_E N_{\mD}W_{\mD, q'}(f1_{Q_0^\ca})$, 
where $E= \sett{x\in \R^n}{N_{\mD}W_{\mD, q'}(f1_{Q_0^\ca})>\epsilon^{-1}}$.
Since $|E|\to 0$, it follows from dominated convergence
that $\|f_0\|_X\to 0$ as $\epsilon\to 0$.

To estimate $f_1$, let $Q_1$ denote the dyadic parent of $Q_0$.
Then
$$
  \sup_{Q_0} N_{\mD}W_{\mD, q'}f_1
  \le
  \inf_{Q_1\setminus Q_0} N_{\mD}W_{\mD, q'}f,
$$
and we conclude that 
$\|f_1\|_X\lesssim \epsilon$
since $N_{\mD}W_{\mD, q'}f_1=N_{\mD}W_{\mD, q'}f$
on $\R^n\setminus Q_0$.
Since $f=f_0+f_1+f_2$ where $\supp f_2\subset Q_0$
and
$$
  \int_{\R^{1+n}_+} |f_2|^{q'} d\mv x
  = \sum_{Q\subset Q_0,
  Q\not\subset\bigcup\mD_f} \int_{Q^w}|f|^{q'} d\mv x
  \le \sum_{Q\subset Q_0, Q\not\subset\bigcup\mD_f} \epsilon^{-q'} |Q^w|
  \le  \epsilon^{-q'} |Q_0^\ca|<\infty,
$$ 
it follows that $f_2\in X_0$. This shows that $X_0$ is dense in $X$.
\end{proof}

\begin{proof}[Proof of Theorem~\ref{thm:mainthm}]
 We first show that it suffices to prove the estimate \eqref{eq:CWforShat}.
Indeed, combining this and Theorem~\ref{thm:sparsecausal} yields
the bound for $S^-$ on $Z_0$, using 
the notation from Lemma~\ref{lem:densedual} and
equivalences of norms between the
different versions of Carleson functionals and Whitney averages
from \cite[Sec. 3]{HytonenRosen:13}. 
By Lemma~\ref{lem:densedual}, we have 
a unique extension of $S^-$ by continuity to all $Z$.

Next consider $S^+$ on $L_{q'}(\R^{1+n}_+)\supset X_0$, with adjoint
$(S^+)^*$ on $L_{q}(\R^{1+n}_+)\supset Z_0$.
For $f\in X_0$ and $g\in Z_0$, we obtain the bound
\begin{equation}   \label{eq:fgdualityest}
  \int_{\R^{1+n}_+} |S^+f(\mv x)| |g(\mv x)| d\mv x
  \lesssim \|f\|_X \|g\|_Z
\end{equation}
by applying the duality to $f$ and $g\,\sgn((S^+f) g)$.
By Fatou's lemma and the density of $Z_0$ in $Z$, \eqref{eq:fgdualityest}
continues to hold for all $g\in Z$.
Let $h= S^+ f$ and let $K\subset \R^{1+n}_+$
be a compact set, so that $\|NW_{q'}(h 1_K)\|_{p'}<\infty$.
It follows from \cite[Thm 3.2]{HytonenRosen:13} and
\eqref{eq:fgdualityest} that 
$\|h 1_K\|_X\lesssim \|f\|_X$.
Exhausting $\R^{1+n}_+$ with $K$ and using Fatou's lemma
shows that $S^+ f\in X$ with $\|S^+ f\|_X\lesssim \|f\|_X$,
for all $f\in X_0$.
By Lemma~\ref{lem:densedual}, we have 
a unique extension of $S^+$ by continuity to all $X$.

It remains to prove \eqref{eq:CWforShat}. 
Fix $Q_0\in\mD$ and consider $\int_{Q_0^\ca} |W_{\mD,q}(\widehat Sf)|d\mv x$.
Consider a Whitney region $Q^w\subset Q_0^\ca$, where $Q\subset Q_0$,
$Q\in\mD$.
We write
$$
\widehat S f(\mv x)=
\sum_{\mv R\in \widehat {\mv\mD}, \mv R\ni \mv x} \barint_{\mv R^\Box_{\mv x}}|f|d\mv y 
= I+II+III,
$$
for a.e. $\mv x\in Q^w$,
where we split the sum according to the cases $\mv R\subsetneqq Q^\ca$,
$\mv R\supset Q^\ca$ and $\mv R\cap Q^\ca=\emptyset$,
noting that $\mv R, Q^\ca\in\mv\mD$,  with $Q^w$ being the top half of $Q^\ca$. 
Clearly $III=0$, for $I$  we have $\mv R\subset Q^w$
since $\mv R\ni \mv x$,  and for $II$ 
we have that $\mv R= R^\ca$ are Carleson boxes  since $Q^\ca$,
and hence $\mv R$, touches $\R^n$. 

The local terms $I$ do not require causality and we may replace $\mv R^\Box_{\mv x}$
by the larger set $\mv R^\Box$. Standard sparse estimates 
via duality apply as follows.
Let $g\ge 0$ be supported on $Q^w$ with $\int g^{q'}d\mv x=1$, $1/q+1/q'=1$.
Then
\begin{multline*}
  \int_{Q^w} g\left(\sum_{\mv R\in\widehat{\mv\mD}, \mv R\subset Q^w}
   1_{\mv R}\barint_{\mv R^\Box}|f|d\mv y\right) d\mv x   
  =\sum_{\mv R\in \widehat{\mv\mD}, \mv R\subset Q^w}
  \left(\barint_{\mv R} g\, d\mv x\right)
   \left(\barint_{\mv R^\Box}|f|d\mv y\right)   |\mv R|\\
 \lesssim \sum_{\mv R\in \widehat{\mv\mD}, \mv R\subset Q^w} \int_{E_R}
  \left(\barint_{\mv R} g\, d\mv x\right)
   \left(\barint_{\mv R^\Box}|f|d\mv y\right)  d\mv z \\
   \lesssim 
   \int_{Q^w}M(g) M(1_{\widetilde Q^w}f) d\mv z 
   \lesssim |Q^w|^{1/q} \left(\barint_{\widetilde Q^w}|f|^q d\mv z \right)^{1/q}.
\end{multline*}
We have used that $|\mv R|\lesssim |E_{\mv R}|$, that
$\barint_{\mv R} g d\mv x\le M(g)$ and 
$\barint_{\mv R^\Box}|f|d\mv y\le M(1_{\widetilde Q^w}f)$
on all $E_{\mv R}$,
H\"older's inequality, and $L_{q'}$ and $L_q$ boundedness of
the Hardy--Littlewood maximal operator $M$. 
Taking supremum over $g$ gives the pointwise bound
$W_{\mD,q}(I)\lesssim \widetilde W_{\mD,q}(f)$. 
Integration over $Q_0^\ca$ yields the desired estimate of $I$
by $C_\mD \widetilde W_{\mD,q}(f)$. 

The non-local terms $II$ do not require sparseness, and we may increase
the sum over
$R^\ca\in\widehat{\mv\mD}$ to a sum over $R\in \mD$.
Note that the term $II$ is essentially constant on $Q^w$, and
we replace the Whitney $L_q$ average by the supremum norm on $Q^w$,
and increase $(R^\ca)^\Box_{\mv x}$ to 
$$
(R^\ca)^\Box_{Q}:= (R^\ca)^\Box\cap
\sett{(s,y)}{s>\ell(Q)/2}.
$$ 
We have
\begin{multline*}
  \int_{Q_0^\ca} |II|d\mv x\lesssim \sum_{Q\in\mD, Q\subset Q_0} |Q^w|
  \sum_{R\in\mD, R\supset Q} |R^\ca|^{-1} \int_{(R^\ca)^\Box_Q}|f|d\mv y  \\
 = \sum_{R\in\mD}\left( \sum_{Q\in\mD, Q\subset Q_0, Q\subset R} |Q^w| |R^\ca|^{-1} \int_{(R^\ca)^\Box_Q}|f|d\mv y\right) = II_1+II_2, 
\end{multline*}
where we split the outer sum according to $R\subset Q_0$ and
$R\supsetneqq Q_0$.
For the tail terms $II_2$ we increase $(R^\ca)^\Box_Q\subset (R^\ca)^\Box= 3R^\ca$,  where we recall that 
$3R^\ca=3R^\ca\cap \R^{1+n}_+$ according to our definition, 
and get
$$
  II_2\le |Q_0^\ca| \sum_{R\in\mD, R\supsetneqq Q_0}  |R^\ca|^{-1} \int_{3R^\ca}|f|d\mv y
  \lesssim |Q_0|\left( \sum_{k=1}^\infty 2^{-k}\right)\inf_{Q_0} C f,
$$ 
since $|3R|/|R^\ca|\lesssim 2^{-k}/\ell(Q_0)$ for the $k$'th ancestor of $Q_0$, and since $|Q_0^\ca|/\ell(Q_0)=|Q_0|$.
For the mid-range terms $II_1$, we estimate
\begin{multline*}
  II_1= \sum_{R\in\mD, R\subset Q_0} |R^\ca|^{-1}\int_{3R^\ca} \left( \sum_{Q\in\mD, Q\subset R} |Q^w| 1_{\ell(Q)<2s} \right) |f(s,y)| dsdy \\
  \lesssim 
   \sum_{R\in\mD, R\subset Q_0} |R^\ca|^{-1}\int_{3R^\ca} (s|R|) |f(s,y)|dsdy \\
   \lesssim 
   \int_{3Q_0^\ca} \left(  \sum_{R\in\mD, R\subset Q_0} s\ell(R)^{-1} 1_{3R^\ca}  \right) |f(s,y)|dsdy \\ 
   \lesssim \int_{3Q_0^\ca} \left( s \sum_{2^{-k}\ge s/3} (2^{-k})^{-1}  \right) |f(s,y)|dsdy 
   \lesssim \int_{3Q_0^\ca} |f|dsdy.
\end{multline*} 
For the second last estimate, note that at a point $(s,y)\in 3Q_0^\ca$
we have
$1_{3R^\ca}=0$ if $2^{-k}<s/3$, where $\ell(R)=2^{-k}$, 
while at each scale $2^{-k}\ge s/3$ at most $3^n$ cubes $R$
contribute the the sum. 
In total we obtain a bound of $II$ by $C(f)$, which completes the proof.
\end{proof}

\section{Motivations from elliptic PDEs} \label{sec:PDEappl}

We end this paper by discussing motivations for the estimates
in Theorem~\ref{thm:mainthm}  coming from
maximal regularity estimates  for PDEs.
A natural point of departure for the discussion is the parabolic problem 
$$
  \pd_t u_t+Lu_t=g_t,
$$
for $u_t(x)= u(t,x)$, $t>0$, $x\in\R^n$, with $u_0=0$.
Here $L= -\divv_x A(x)\nabla_x$, where $A\in L_\infty(\R^n,\mL(\C^{n}))$ is accretive.
The maximal regularity problem for a space $\mH$ of functions in $\R^{1+n}_+$, 
is whether a source $g\in\mH$ yields a solution $u$ with $L u$ 
 (or equivalently $\pd_t u$)  in $\mH$.
The  parabolic maximal regularity operator  is
\begin{equation}   \label{eq:parabmaxreg}
  Lu_t= \int_0^t L e^{-(t-s)L} g_s ds,
\end{equation}
which is upward mapping like the operators $S^+$ considered in this paper.
However,  the operator $g\mapsto Lu$  does not have the CZO kernel bounds
\eqref{eq:kernelbound}, even for $L=-\Delta_x$.
Indeed  the estimate
$|\pd_t (t^{-n/2}e^{-|x|^2/(4t)})| \lesssim (t+|x|)^{-(1+n)}$ fails,
and  non-tangential estimates are not natural for this problem due to  the parabolic scaling $t\sim |x|^2$. 

Instead our motivation comes from the analogous considerations for
elliptic divergence form equations
\begin{equation}   \label{eq:secondorder}
  \divv_{t,x} A(t,x) \nabla_{t,x} u (t,x) =0, \qquad t>0, x\in\R^n,
\end{equation}
where 
$A=\begin{bmatrix} a & b \\ c & d \end{bmatrix}\in 
L_\infty(\R^{1+n}_+,\mL(\C^{1+n}))$ 
is accretive.
Following \cite{AuscherAxelssonMcIntosh:10,AuscherAxelsson:11}, we consider the
associated first order generalized Cauchy--Riemann system
\begin{equation}   \label{eq:firstorder}
  \pd_t f_t+ DB_t f_t = 0
\end{equation}
for the conormal gradient 
$f_t= \begin{bmatrix} a\pd_t u+ b\nabla_{x}u &
\nabla_x u \end{bmatrix}^T$  of $u$,  consisting of the 
conormal derivative and tangential gradient. 
This uses the  self-adjoint first order differential  operator
$D= \begin{bmatrix} 0 & \divv_x \\ -\nabla_x & 0 \end{bmatrix}$
and transformed accretive matrix
$$B= \begin{bmatrix} a^{-1} & -a^{-1}b \\ ca^{-1} & d-ca^{-1}b \end{bmatrix},$$ 
acting in $L_2(\R^n; \C^{1+n})$ for each $t>0$. 
For each $t>0$, the conormal gradient belongs to the closure of the range
of $D$ since $\curl_x(\nabla_x u)=0$, and
it was demonstrated in \cite[Sec. 3]{AuscherAxelssonMcIntosh:10}
that \eqref{eq:secondorder} is equivalent to \eqref{eq:firstorder}, under
this tangential curl free constraint on the tangential part of $f_t$.

Now fix bounded and accretive coefficients $B= B(x)$, which are independent
of $t$, and consider the equation
\begin{equation}   \label{eq:firstordermaxreg}
  \pd_t f_t+ DB f_t = g_t,
\end{equation}
where $f$ and $g$ are functions of $t>0$ and take values in 
$\mH= \clos{\ran(D)}\subset L_2(\R^n; \C^{1+n})$. 
The $t$-independent case when $B_t=B$ and $g=0$ was studied in 
\cite{AuscherAxelssonMcIntosh:10}, whereas perturbation results for
$t$-dependent coefficients
$B_t$ were obtained in \cite{AuscherAxelsson:11} via Duhamel's principle
with $g_t= D(B-B_t)f_t$. 
As in these works, we make use of the holomorphic functional calculus of
the bisectorial operator $DB$ in $\mH$:
the symbols $\chi^\pm(z)=\begin{cases} 1, & \pm\re z>0,\\ 0, &\pm\re z<0, \end{cases}$ yield the spectral projections 
$E^\pm= \chi^\pm(DB)$,
the symbol $|z|= z(\chi^+(z)-\chi^-(z))$ yields
the sectorial operator $\Lambda= |DB|$
and the symbols $e^{-t|z|}$
yield the Poisson semigroup $e^{-t\Lambda}$
generated by $\Lambda$. 

Following \cite{AuscherAxelsson:11}, we integrate \eqref{eq:firstordermaxreg}
with boundary and decay conditions 
$\lim_{t\to 0} E^+ f_t=0= \lim_{t\to\infty} E^- f_t$ 
and a general source term $g_t$, and obtain 
\begin{equation}   \label{eq:implicitdefSpm}
  -\pd_t f_t= DB f_t 
  = T^+ g_t+ T^- g_t,
\end{equation}
which uses both an upward mapping maximal regularity operator 
\begin{equation}   \label{eq:upmaxregDB}
  T^+ g_t=  \int_0^t \Lambda e^{-(t-s)\Lambda}E^+  g_s ds
\end{equation}
and a downward mapping maximal regularity operator
\begin{equation}  \label{eq:downmaxregDB}
  T^- g_t= \int_t^\infty\Lambda e^{-(s-t)\Lambda} E^- g_s ds.
\end{equation}  
We saw above that the parabolic maximal regularity problem concerned
an upward mapping operator  \eqref{eq:parabmaxreg},  using the 
heat semigroup $\exp(-tL)$, which is not of the form considered in
this paper.
The elliptic maximal regularity problem involves  both upward and downward mapping
operators $T^+$ and $T^-$. Both $T^\pm$  use the Poisson 
semigroup $\exp(-t\Lambda)$,
and  $T^\pm$ generalize the causal parts of the  
Beurling singular integral
as the following example from \cite{NystromRosen:14} shows.

\begin{ex}   \label{ex:cauchylip}
  Consider the Cauchy--Riemann equations for a holomorphic function $f(z)$
  in the region above the graph $\gamma$ of a Lipschitz function $y= \phi(x)$.
  Parametrizing $z= x+i(\phi(x)+t)$, these equations read
$$
  \pd_t f+ BDf= 0  \quad\text{in } \R^{2}_+ 
$$  
when written in the real basis $\{1+i \phi'(x), i\}$.
   Here $B= (1+i\phi'(x))^{-1}$ is an accretive, $t$-independent complex multiplier,
   and $D= -i\pd_x$ is the first order self-adjoint derivative.
   
   Consider now the associated Poisson semigroups 
   and maximal regularity operators  in \eqref{eq:implicitdefSpm},
   with $DB$ replaced  by the similar operator $BD= B(DB)B^{-1}$. 
   As observed in \cite[Lem. 3.1]{NystromRosen:14},
   by computing the kernels of resolvents and operators in the functional
   calculus of $BD$, the causal operators from 
   \eqref{eq:upmaxregDB} and \eqref{eq:downmaxregDB}
   are
   \begin{equation}   \label{eq:beurlingint+}
     T^+g_t(z)
     = \frac{-1}{2\pi} \int_0^t \int_\gamma\frac{g_s(w)dw}{(w+is-(z+it))^2}ds
   \end{equation}
   and
   \begin{equation}   \label{eq:beurlingint-}
     T^-g_t(z)
     = \frac{-1}{2\pi} \int_t^\infty \int_\gamma\frac{g_s(w)dw}{(w+is-(z+it))^2}ds.
   \end{equation} 
   We parametrize $w= y+i\phi(y)$, $y\in\R$, and consider the 
   bounded and invertible multiplier $dw/dy= 1+i\phi'(y)$.
   It follows that $T^\pm= S^\pm (1+i\phi')$, where $S^\pm$ are
   defined as in \eqref{eq:beurlingint+} and \eqref{eq:beurlingint-},
   but replacing $dw$ by $dy$.
   
   The operators $T^\pm$, and hence also the operators $S^\pm$,
   are bounded on $L_2(\R)$. See \cite[Thm. 6.5]{AuscherAxelsson:11},
   which also applies to the more general operators
   \eqref{eq:upmaxregDB} and \eqref{eq:downmaxregDB}
   in any dimension.
   Moreover, we see that the kernels $k^\pm$ for $S^\pm$ are the
   causal truncations of
   $$
     k((t,x),(s,y))= \frac 1{(y+i(\phi(y)+s)-x-i(\phi(x)+t))^2}.
   $$
   Since $\phi$ is a Lipschitz function, it is readily verified that
   the CZ  bounds \eqref{eq:kernelbound} and \eqref{eq:kernelreg} hold
   for this $k$, with $\gamma=1$. 
\end{ex}

For general elliptic equations \eqref{eq:secondorder}, the estimate \eqref{eq:kernelbound} of the distribution kernels of the associated operators  $T^\pm$  holds only in an average $L_2$ sense.
Global $L_2$ bounds 
$$
  \| \Lambda e^{-|t-s|\Lambda}E^\pm \|_{ L_2\to L_2 } \lesssim 1/|t-s|
$$
follow from the Kato quadratic estimates, more precisely
\cite[Thm. 3.1]{AxelssonKeithMcIntosh:06}.
Also $L_2$ off-diagonal estimates for $\Lambda e^{-|t-s|\Lambda}E^\pm$
can be derived from such estimates for the resolvents of $DB$
(see \cite[Prop. 5.2]{AxelssonKeithMcIntosh:06}).
As for pointwise kernel estimates, we saw in Example~\ref{ex:cauchylip}
that for general non-smooth coefficients $B$, only
the first estimate in \eqref{eq:kernelreg} can hold for $DB$, 
and  similarly  only the second estimate can hold for $BD$.
More importantly, even \eqref{eq:kernelbound} may fail for $n\ge 2$.
For $n=1$, at least for real coefficients, the pointwise
kernel bound \eqref{eq:kernelbound} follows from the interior regularity
estimates for solutions to \eqref{eq:secondorder} of
De Giorgi, Nash and Moser (which hold for real coefficients in any dimension).
Indeed, $f_t= e^{-t|BD|} f_0$ solves the first order equation
$\pd_t f_t + BD f_t=0$, where the first component of $f$ satisfies
a second order equation \eqref{eq:secondorder} and hence have 
pointwise bounds.
See \cite[Sec. 3.2]{AuscherAxelsson:11}.
In general this is not true for the remaining components (conjugate functions) 
in $f$, except when $n=1$, in which case also the second component/conjugate function does satisfy an equation \eqref{eq:secondorder},  with  some 
conjugate coefficients.
See \cite[Lem. 5.3]{AuscherRosen:12}.
To summarize, the operators  $T^\pm$ from
\eqref{eq:upmaxregDB} and \eqref{eq:downmaxregDB} 
are causal CZOs, as in Section~\ref{sec:causalSIOs}, at least if $n=1$,
if $A$ is real and if we factor out an invertible multiplicative factor.

Coming to estimates of  $T^\pm$,  
we consider conormal gradients $f$ solving the 
Cauchy--Riemann system \eqref{eq:firstorder} in $\R^{1+n}_+$.
It is known from \cite{AuscherAxelsson:11} 
that on the one hand the function space $\mX$ in $\R^{1+n}_+$ with norm
$$
  \|f\|_\mX=\|N(W_2 f)\|_2,
$$
along with the subspace $\mY^*= L_2(\R^{1+n}_+; t^{-1}dtdx)$,
are natural for boundary value problems with boundary topology $L_2(\R^n)$
for $\lim_{t\to 0} f_t$.
On the other hand, the function space
$$
  \mY= L_2(\R^{1+n}_+; tdtdx),
$$
along with the subspace $\mZ$ with norm $\|f\|_{\mZ}=\|C(W_2 f)\|_2$,
are natural for boundary value problems with boundary topology $H^{-1}(\R^n)$
for $\lim_{t\to 0} f_t$. (Corresponding to boundary function space $L_2(\R^n)$
for the potential $u$.)
It was shown in \cite[Thm. 3.1]{HytonenRosen:13} that $\mZ^*=\mX$,
but that $\mZ$ is not reflexive.

Previously known bounds for the causal operators  $T^\pm$ in 
\eqref{eq:upmaxregDB} and \eqref{eq:downmaxregDB}  from 
\cite[Prop. 7.1]{AuscherAxelsson:11}, 
for general accretive coefficients $B$ and not assuming
pointwise kernel bounds, can be summarized in the following
diagrams.
$$
\xymatrix@1{
  \mX 
  \ar[d]_{\mE} & \mX \ar[d]^{\mE}  \\
  \mY^*\ar[ru]|->>>>{T^-} \ar[r]^>>>{T^+} & \mY^*
  }
  \qquad\qquad\qquad
  \xymatrix@1{
  \mY \ar[r]^>>>{T^-} \ar[d]_{\mE} & \mY \ar[d]^{\mE}  \\
  \mZ\ar[ru]|->>>>{T^+} 
  & \mZ
  }
$$
The estimate 
\begin{equation}  \label{eq:L1L2tentspace}
 \left(  \int_{\R^2_+} |f(t,y)|^2 t dtdx \right)^{1/2} \approx
  \|(A(|f|^2 t))^{1/2}\|_2 \lesssim   \|A(W_2 f)\|_2 \approx
  \|C(W_2 f)\|_2
\end{equation}
shows that $\mZ\subset \mY$, and hence $\mY^*\subset \mX$.
It was shown in \cite[Lem. 5.5]{AuscherAxelsson:11} that
a multiplication operator $\mE$ maps $\mX\to \mY^*$ if 
it satisfies the Carleson condition 
\begin{equation}   \label{eq:CarlcondmE}
\|C(W_\infty\mE^2/t)\|_\infty<\infty.
\end{equation}
Moreover, it follows from \cite{HytonenRosen:13} that $\mE$
maps as in the diagrams if and only if \eqref{eq:CarlcondmE} holds.

We solve PDEs $\pd_t f_t+ DB_t f_t=0$ with $t$-dependent
coefficients by freezing the coefficients $B_t$ to $B$.
Writing
$$
  \mE_t= B^{-1}(B-B_t),
$$
with $B= \lim_{t\to 0}B_t$, the PDE becomes
$\pd_t f_t+ DB f_t=DB\mE_t$. 
To prove the existence of boundary values of solutions $f_t$
and representation formulas as
in \cite[Sec. 8-9]{AuscherAxelsson:11}, it is needed that 
\begin{equation}   \label{eq:SpmmE}
  (T^+ + T^-)\mE
\end{equation} 
is a bounded operator (with small norm to obtain a Cauchy type 
representation formula  of $f$ in terms of the boundary 
trace $f_0$). 
This will yield equivalences of norms 
$\|f_0\|_{L_2(\R^n)}\approx \|f\|_\mX$ for solutions $f\in \mX$ to
\eqref{eq:firstorder}, and
$\|f_0\|_{H^{-1}(\R^n)}\approx \|f\|_\mY$ for solutions $f\in \mY$ to
\eqref{eq:firstorder}. 
Thus two fundamental estimates in the study of boundary problems 
for elliptic divergence form equations \eqref{eq:secondorder}
are to bound the operators $T^\pm\mE$ in the 
$\mX$ and $\mY$ norms, under as weak as possible hypothesis on $\mE$, which measures the $t$-variation of the coefficients. 

\begin{itemize}
\item
{\bf $L_2(\R^n)$ boundary traces of solutions $f$:}
Assuming that pointwise kernel bounds hold as discussed above, 
so that $T^+$ is a CZO  modulo a trivial multiplicative factor, 
it follows from Theorem~\ref{thm:mainthm} that already  $T^+:\mX\to\mX$ 
is bounded. We obtain the new result that only $\mE\in L_\infty(\R^{1+n}_+)$,
which is weaker than \eqref{eq:CarlcondmE},
is needed for boundedness of  $T^+ \mE:\mX\to\mX$. 
However, this does not yield any new estimate for 
 $(T^++T^-)\mE$ as a whole,  needed
for the PDE application.
Indeed, for the downward mapping maximal regularity operator, the best known
bound is that  $T^-: \mY^*\to \mX$. 
Similar to below,
we conjecture that the Carleson condition \eqref{eq:CarlcondmE} is in general necessary for the boundedness of  $T^-\mE: \mX\to \mX$. 

\item
{\bf $H^{-1}(\R^n)$ boundary traces of solutions $f$:}   
It is known from \cite{AuscherAxelsson:11,HytonenRosen:13}
that already  $T^-:\mY\to\mY$  is bounded, so 
only $\mE\in L_\infty(\R^{1+n}_+)$ is needed for  $T^- \mE:\mY\to\mY$. 
The new result from Theorem~\ref{thm:mainthm} 
that  $T^-:\mZ\to \mZ$  is bounded, is not relevant for the
boundedness of  $T^- \mE:\mY\to\mY$. 

For the upward mapping maximal regularity operator, the best known
bound is that  $T^+: \mZ\to \mY$. 
This follows by duality from \cite[Thm. 6.8]{AuscherAxelsson:11}.
Furthermore, the Carleson condition \eqref{eq:CarlcondmE} on multipliers $\mE$ 
is in general necessary for the boundedness of  $T^+ \mE:\mY\to\mY$. 
To see this, by duality and the above estimate  $T^-:\mY\to\mY$,  we equivalently consider the boundedness of  
$\mE^*(T^++T^-):\mY^*\to\mY^*$. 
In the case of the Beurling transform it was proved in
Astala-Gonzalez \cite[Thm. 1]{AstalaGonzalez:16}
that this latter boundedness holds if and only if $\mE$ satisfies
\eqref{eq:CarlcondmE}, but without the Whitney factor $W_\infty$ (due
to the pointwise kernel bounds present for the Beurling transform).
\end{itemize}

\subsection{Summary and open problems}

Causal maximal regularity operators $T^\pm$ are fundamental 
in the study of elliptic boundary value problems, as evidenced by
\cite{AuscherAxelsson:11}.
Such operators $T^\pm$ are close to being causal CZOs $S^\pm$
and we have investigated boundedness of such $S^\pm$ in this paper.
In regards to this motivation from elliptic PDEs, our results are
meager: assuming pointwise kernel bounds, it follows from
Theorem~\ref{thm:mainthm} that the $\mX$ norm 
of $T^+\mE$ is bounded by
$\|\mE\|_{\infty}$. 
However, we believe that the techniques developed in this paper
are important and that future work in this direction may result
in more substantial applications.
We end by formulating some open problems.

\begin{itemize}
\item
Can the estimates in Theorem~\ref{thm:mainthm}, with $p=q=2$
at least, be shown for the more general operators $T^\pm$ appearing in 
\eqref{eq:upmaxregDB} and \eqref{eq:downmaxregDB},
without assuming pointwise kernel bounds?
A positive result in this direction are the weighted norm estimates
through sparse domination
for such generalized CZOs by 
Bernicot, Frey and Petermichl~\cite{BFP:16}.

\item
Consider the $L_r$ based tent spaces
of Coifman, Meyer and Stein~\cite{CoifmanMeyerStein:85},
with $1<r<\infty$ and in particular $r=2$.
The present paper concerns the end point cases $r=1$ and $r=\infty$,
and the duality results \cite{HytonenRosen:13} have been extended to the scale of tent spaces by Huang~\cite{Huang:16}.
Can sparse estimates, possibly using $L_r$ averages,
prove tent space estimates of CZOs,
or more general maximal regularity operators as in \cite{AuscherKrieglerMonniauxPortal:12}?
A concrete PDE application would be to reprove the bound 
$T^+:\mZ\to \mY$ from \cite[Prop. 7.1]{AuscherAxelsson:11},
using sparse estimates.
Note that $\mZ$ and $\mY$ are $L_1$ and $L_2$ based tent spaces
respectively.

\item
We started this Section by considering the more well known 
upward mapping
maximal regularity operator \eqref{eq:parabmaxreg}
for parabolic initial value problems, but this does not have
the CZ kernel bounds. 
Can sparse estimates be adapted to show a parabolic  
analogue of the non-tangential bound
\eqref{eq:pqestSplus}?
\end{itemize}

\appendix

\section{The Beurling counterexample}

We supply the details omitted in Example~\ref{ex:upBeucounter},
which show that no Carleson bound can hold for the Beurling transform
$S$ defined in \eqref{eq:Beurling}.
Assuming for contradiction that
\begin{equation}\label{eq:SbdCWqp-prelim}
  \|C(W_q(Sf))\|_p\lesssim\|C(W_q f)\|_p,
\end{equation}
we derive a series of consequences, eventually leading to an obvious impossibility. By \cite[Prop. 3.7]{HytonenRosen:13}, we may replace $C$ and $W_q$ by their dyadic versions, but we continue to denote them by the same symbols for simplicity. Since $W_q(Sf)\geq W_1(Sf)$ and $C(W_1(Sf))=C(Sf)$, our first derived bound will be
\begin{equation}\label{eq:SbdCWqp}
  \|C(Sf)\|_p\lesssim\|C(W_q f)\|_p.
\end{equation}
As suggested in Example~\ref{ex:upBeucounter}, we apply \eqref{eq:SbdCWqp} to $f_N(t,x)=g(x)\phi_N(t)$, where $\phi_N=2^N 1_{(0,2^{-N})}$, and investigate the limit $N\to\infty$.

We first consider the right-hand side. For a Whitney cube $R^w$, we have
\begin{equation*}
  |R^w|^{-1/q}\|f_N\|_{L_q(R^w)}=
  \begin{cases} 2^N |R|^{-1/q}\|g\|_{L_q(R)}, & \text{if }\ell(R)\leq 2^{-N}, \\ 0, & \text{else}.\end{cases}
\end{equation*}
Let us denote by
\begin{equation*}
  M_{q,2^{-N}}g(x):=\sup_{\substack{ R:x\in R\in\mathcal D \\ \ell(R)\leq 2^{-N}}}|R|^{-1/q}\|g\|_{L_q(R)}
\end{equation*}
the $L_q$ (dyadic) maximal operator restricted to cubes of side-length $\ell(R)\leq 2^{-N}$.
Thus, for all $x\in Q\in\mathcal D$, we have
\begin{equation*}
  \frac{1}{|Q|}\int_{Q^{\operatorname{ca}}}W_q f_N
  \leq \frac{1}{|Q|}\int_0^{\min(\ell(Q),2^{-N})}\int_{Q} 2^N M_{q,2^{-N}} g(y)dy\,dt\leq M(M_{q,2^{-N}} g)(x),
\end{equation*}
and hence $C(W_q f_N)\leq M(M_{q,2^{-N}} g)$ and
\begin{equation*}
  \|C(W_q f_N)\|_p
  \leq \|M(M_{q,2^{-N}} g)\|_p
  \lesssim \|M_{q,2^{-N}}g\|_p,\quad p>1.
\end{equation*}
If $p>q$, we could further dominate this by $\|g\|_p$, uniformly in $N$, but we do not wish to impose this restriction. Instead, if $g$ is, say, a continous compactly supported function (and this will be enough for our eventual counterexample), then $M_{q,2^{-N}}g(x)\to |g(x)|$ pointwise, as $N\to\infty$, and $M_{q,2^{-N}}g$ is dominated by a bounded and compactly supported function, uniformly in $N$. It hence follows from dominated convergence that
\begin{equation}\label{eq:RHS}
  \limsup_{N\to\infty}\|C(W_q f_N)\|_p\lesssim\|g\|_p.
\end{equation}

We then turn to the left-hand side of \eqref{eq:SbdCWqp}. If $\im z>2^{-N}$, there is no singularity in \eqref{eq:Beurling} for $f=f_N$, and hence
\begin{equation*}
\begin{split}
  Sf_N(z) &= \frac{-1}\pi \int_{\im w>0} \frac {f_N(w)}{(w-z)^2} |dw|,\qquad \im z>2^{-N}, \\
  &=2^N\int_0^{2^{-N}}\Big(\frac{-1}\pi \int_{\R} \frac {g(s)}{(s+it-z)^2}ds\Big)dt
  \underset{N\to\infty}\longrightarrow \frac{-1}\pi \int_{\R} \frac {g(s)}{(s-z)^2}ds.
\end{split}
\end{equation*}
Since any $z\in\C$ with $\im z>0$ satisfies $\im z>2^{-N}$ when $N$ is large enough, we have
\begin{equation*}
  \lim_{N\to\infty}Sf_N(z)= \frac{-1}\pi \int_{\R} \frac {g(s)}{(s-z)^2}ds=:v(z),\qquad \im z>0.
\end{equation*}
From repeated applications of Fatou's lemma it then follows that
\begin{equation*}
\begin{split}
  \frac{1}{|Q|}\int_{Q^{\operatorname{ca}}}|v(z)|\,|dz|
  &\leq\liminf_{N\to\infty}\frac{1}{|Q|}\int_{Q^{\operatorname{ca}}}|Sf_N(z)|\,|dz|
  \leq\liminf_{N\to\infty} C(Sf_N)(x),\quad x\in Q, \\
  C(v) & \leq\liminf_{N\to\infty} C(Sf_N), \qquad
  \|C(v)\|_p  \leq\liminf_{N\to\infty} \|C(Sf_N)\|_p.
\end{split}
\end{equation*}
In combination with \eqref{eq:RHS}, this shows that \eqref{eq:SbdCWqp} implies the estimate
\begin{equation*}
  \|C(v)\|_p  \lesssim\|g\|_p.
\end{equation*}
If $g$ is real-valued, by a direct computation one checks that $|v|=|\nabla u|$, where $u$ is the Poisson extension of $g$, and the previous estimate can be rewritten as \eqref{eq:Cup<gp}. By splitting into real and imaginary parts, it is immediate that if this holds for real $g$, it also holds for complex $g$.

It remain to show that estimate \eqref{eq:Cup<gp} cannot hold. Using $CF(x)\geq 1_{Q}(x)\int_{Q^{\operatorname{ca}}} |F| dsdy$ with $Q=(0,1)$ and $|\nabla u|\geq|\partial_t u|$, we find that \eqref{eq:Cup<gp} would in particular imply that
\begin{equation}\label{eq:Cu1<gp}
  \int_0^1\|\partial_t u(t,\cdot)\|_{L_1(0,1)} dt\lesssim\|g\|_p.
\end{equation}
Let us consider random functions of the form $g=\sum_k \varepsilon_k g_k$, where the $\varepsilon_k$ are independent random signs with $P(\varepsilon_k=-1)=P(\varepsilon_k=+1)=\frac12$. Denoting by $u_k$ the Poisson extension of $g_k$, \eqref{eq:Cu1<gp} would then imply, using basic properties of random sums (Kahane's contraction principle in the first step, and Khintchine's inequality in the last one; see \cite[Prop. 3.2.10]{HNVW1}
and \cite[Thm. 6.1.13, Prop. 6.3.3]{HNVW2}), that
\begin{equation}\label{eq:Cu1<gp2}
\begin{split}
  \int_0^1\max_k\|\partial_t u_k(t,\cdot)\|_{L_1(0,1)} dt
  &\leq\int_0^1 E\Big\|\sum_k \varepsilon_k\partial_t u_k(t,\cdot)\Big\|_{L_1(0,1)} dt  \\
   &\lesssim E\Big\|\sum_k \varepsilon_k g_k\Big\|_{p} 
  \lesssim \Big\|\Big(\sum_k |g_k|^2\Big)^{1/2}\Big\|_{p},
\end{split}
\end{equation}
where $E$, appearing only in the intermediate steps, denotes the mathematical expectation. (The advantage of \eqref{eq:Cu1<gp2} in view of reaching a contradiction is that we have eliminated interactions between different functions $g_k$; in particular, on the left, we only need to check that individual terms are large for appropriate parameter values, and we do not need to worry about this largeness being destroyed by the largeness of some other term of opposite sign.)

We will now construct appropriate $g_k$ to see the failure of this derived estimate \eqref{eq:Cu1<gp2}. Let
\begin{equation*}
  g_k(x)=\phi(x)e^{i2\pi 2^k x},
\end{equation*}
where $\phi$ is a test function to be specified shortly. Note that $|g_k|=|\phi|$. Using the normalisation $\hat f(\xi)=\int_{\R}f(x)e^{-i2\pi x\xi}dx$ for the Fourier transform,  the Poisson integral $P_t f(x)$ has Fourier transform $\widehat{P_t f}(\xi)=e^{-2\pi t|\xi|}\hat f(\xi)$, and its time derivative the transform $\widehat{\dot P_t f}(\xi)=-2\pi|\xi|e^{-2\pi t|\xi|}\hat f(\xi)$. Hence
\begin{equation*}
  \hat g_k(\xi)=\hat\phi(\xi-2^k),\qquad
  \widehat{\dot P_t g_k}(\xi)=-2\pi |\xi|e^{-2\pi t|\xi|}\hat\phi(\xi-2^k).
\end{equation*}
Let us now choose a Schwartz test function $\phi$, not identically zero, such that the support of $\hat\phi$ is a compact subset of $\R_+$. Thus $\phi(\xi-2^k)$ is only non-zero for $\xi>2^k>0$, and hence we can drop the absolute values on $\xi$ above. We have
\begin{equation*}
\begin{split}
  \widehat{\dot P_t g_k}(\eta+2^k)
  &=-2\pi (\eta+2^k)e^{-2\pi t(\eta+2^k)}\hat\phi(\eta) \\
  &=2^{-2\pi t 2^k}(-2\pi\eta e^{-2\pi t\eta}\hat\phi(\eta)-2\pi 2^k e^{-2\pi t\eta}\hat\phi(\eta)) \\
  &=2^{-2\pi t 2^k}(\widehat{\dot P_t\phi}(\eta)-2\pi 2^k\widehat{P_t\phi}(\eta)).
\end{split}
\end{equation*}
Thus, inverting the Fourier transform,
\begin{equation*}
  \dot P_t g_k(x)e^{-i2\pi 2^k x}
  =2^{-2\pi t 2^k}(\dot P_t\phi(x)-2\pi 2^k P_t\phi (x)),
\end{equation*}
and hence
\begin{equation*}
\begin{split}
  |\dot P_t g_k(x)|
  &\geq 2^{-2\pi t 2^k}(2\pi 2^k |P_t\phi (x)|-|\dot P_t\phi(x)|), \\
  \|\dot P_t g_k\|_{L_1(0,1)}
  &\geq 2^{-2\pi t 2^k}(2\pi 2^k \|P_t\phi \|_{L_1(0,1)}-\|\dot P_t\phi\|_{L_1(0,1)}).
\end{split}
\end{equation*}

We claim that $\|P_t\phi \|_{L_1(0,1)}\geq c_0$ and $\|\dot P_t\phi \|_{L_1(0,1)}\leq c_1$ for some positive constants depending only on the choice of $\phi$, but not on $t\in(0,1)$. For the first one, note that $\widehat{P_t\phi}(\xi)=e^{-2\pi\xi t}\hat\phi(\xi)$ is not identically zero, and its support, the same as that of $\hat\phi$, is a compact subset of $\R_+$. Hence $P_t\phi(x)$ is not identically zero on $\R$, and it extends to an analytic function, hence it is also not identically zero for $x\in(0,1)$, and thus $\|P_t\phi\|_{L_1(0,1)}>0$ for every $t\in[0,1]$. Since $t\mapsto P_t\phi$ is continuous from $[0,1]$ to $L_1(0,1)$, the function $t\mapsto \|P_t\phi\|_{L_1(0,1)}$ attains a minimum on $[0,1]$, and this gives the number $c_0>0$. For the second claim, note that $\widehat{\dot P_t\phi}(\xi)=-2\pi \xi e^{-2\pi\xi t}\hat\phi(\xi)=ie^{-2\pi\xi t}(i2\pi\xi)\hat\phi(\xi)=i\widehat{P_t(\phi')}(\xi)$, and hence
\begin{equation*}
  \|\dot P_t\phi \|_{L_1(0,1)}\leq\|P_t(\phi') \|_{L_1(\R)}\leq \| \phi' \|_{L_1(\R)}=c_1.
\end{equation*}
Using these bounds, we conclude that
\begin{equation*}
   \|\partial_t u_k(t,\cdot)\|_{L_1(0,1)}= \|\dot P_t g_k\|_{L_1(0,1)}\gtrsim 2^{-2\pi t 2^k} 2^k ,\qquad k> k_0,\quad t\in(0,1).
\end{equation*}

With $k=k_0+1,\ldots,k_0+K$, we can now see the failure of \eqref{eq:Cu1<gp2}. For $t\in(2^{-j},2^{1-j})$ and $j=k_0+1,\ldots,k_0+K$, we find that
\begin{equation*}
  \max_k\|\partial_t u_k(t,\cdot)\|_{L_1(0,1)}
  \geq\|\partial_t u_j(t,\cdot)\|_{L_1(0,1)}\gtrsim 2^{-2\pi t 2^j} 2^j\gtrsim 2^j,
\end{equation*}
and hence
\begin{equation*}
  \int_0^1\max_k\|\partial_t u_k(t,\cdot)\|_{L_1(0,1)}dt
  \geq\sum_{j=k_0+1}^{k_0+K}\int_{2^{-j}}^{2^{1-j}}\max_k\|\partial_t u_k(t,\cdot)\|_{L_1(0,1)}dt\gtrsim \sum_{j=k_0+1}^{k_0+K}1=K,
\end{equation*}
while, on the other hand, we have
\begin{equation*}
  \Big\|\Big(\sum_{k=k_0+1}^{k_0+K} |g_k|^2\Big)^{1/2}\Big\|_{p}
  =\Big\|\Big(\sum_{k=k_0+1}^{k_0+K} |\phi|^2\Big)^{1/2}\Big\|_{p}
  =\Big\| \sqrt{K}|\phi|\Big\|_{p} \lesssim\sqrt{K}.
\end{equation*}
Thus \eqref{eq:Cu1<gp2} would imply that $K\lesssim\sqrt{K}$, which is clearly false. This contradiction shows the failure of each of the estimates \eqref{eq:Cup<gp}, \eqref{eq:SbdCWqp-prelim}, \eqref{eq:SbdCWqp}, \eqref{eq:Cu1<gp}, and \eqref{eq:Cu1<gp2}.

\bibliographystyle{acm}

\end{document}